\documentclass[11pt]{article}
\usepackage{amsmath}
\usepackage{amssymb,amsbsy,amsthm}
\usepackage{graphicx}
\usepackage[dvipsnames]{xcolor}
\usepackage{mathtools}
\usepackage{enumerate}
\usepackage{enumitem} 
\usepackage{cases}
\usepackage[margin=1in]{geometry}
\usepackage{pdfsync}
\usepackage{hyperref}
\usepackage{cleveref}
\usepackage{wrapfig} 
\usepackage{appendix}

\allowdisplaybreaks[1]
\numberwithin{equation}{section}

\usepackage{titlesec}

\DeclareMathOperator{\N}{\mathbb{N}} 

\newcommand{\p}{\mathbb{P}} 
\newcommand{\E}{\mathbb{E}} 
\DeclareMathOperator{\Cov}{Cov} 
\newcommand{\TV}{\mathrm{TV}} 

\newtheorem{theorem}{Theorem}[section]

\newtheorem{proposition}[theorem]{Proposition}

\newtheorem{fact}[theorem]{Fact}
\newtheorem{corollary}[theorem]{Corollary}
\newtheorem{conjecture}[theorem]{Conjecture}

\newtheorem{definition}[theorem]{Definition}

\newtheorem{example}[theorem]{Example}

\newtheorem{remark}[theorem]{Remark}

\title{Letter frequency in shifts of finite type with one forbidden word}

\author{
	Miklós Bóna\textsuperscript{1}
	\and
	Balázs Maga\textsuperscript{2}
	\and
	Jacob Richey\textsuperscript{2,\dag}
}

\date{\today}

\begin{document}
	
	\maketitle
	
	\begingroup
	\renewcommand{\thefootnote}{}
	\footnotetext{\textsuperscript{1} University of Florida, Gainesville, Florida, USA.}
	\footnotetext{\textsuperscript{2} HUN-REN Alfr\'ed R\'enyi Institute of Mathematics, Budapest, Hungary.}
	\footnotetext{\textsuperscript{\dag} Email: \texttt{jfrichey001@gmail.com}.}
	\endgroup


\begin{abstract} 
This work considers combinatorial and statistical aspects of {\em{shifts of finite type}}, which are families of words over a finite alphabet which avoid a fixed class of {\emph{forbidden}} sub-words. The overarching question we are interested in is: how do local statistics of a uniformly random element of the shift space depend on combinatorial features of the forbidden set? We focus on the binary alphabet $\{0,1\}$, the class of shift spaces where a single pattern is forbidden, and the average frequency of $1$s (equivalently, the probability of observing $1$ at a given position). In this case, the relevant combinatorial information is encoded by a two-variable auto-correlation polynomial associated to the forbidden word, which we call the {\em{border polynomial}}. We present several results and examples characterizing the ordering of all words by their letter frequencies: for example, we describe the set of patterns which, when forbidden, cause the frequency of $1$s to increase, decrease, or stay exactly $1/2$. Our methodologies include novel explicit local injections and bijections, generating function analysis, and a connection with a probabilistic notion of letter frequency.
\end{abstract}


\section{Introduction} \label{sec:intro}

The prototypical example of a shift of finite type (SFT), which is well-known to both symbolic dynamicists and combinatorialists, is the set of binary words with no two $1$s in a row, sometimes called the {\em{golden mean shift}} because such sequences are counted by the Fibonacci numbers. More generally, we may consider the class of families of sequences obtained by replacing the condition `no two $1$s in a row' with the condition `no contiguous subword in $\mathcal{F}$', where $\mathcal{F}$ is a fixed set of forbidden patterns, $\mathcal{F} = \{w_1,w_2,\ldots,w_r\}$, for some (finite) binary words $w_i$. Unlike pattern avoidance for permutations (see e.g.\ \cite{Vatter2015PermutationClasses}), this notion of pattern avoidance imposes a local condition, and as a result the uniform measure on the set of such sequences admits a Markovian description via the Perron-Frobenius theorem which enables exact, finite calculations. This analytic approach can be used to compute any local statistic, like the probability to observe a $1$ at a given location, or equivalently, the asymptotic frequency of $1$s in a uniformly random long word. (In particular, these quantities are always algebraic.) The details of this approach are explained for the golden mean shift in Example \ref{ex:golden}; see also Appendix \ref{app:parryconv} for the basics of this method, which is sometimes called the {\em{transfer matrix method}}, for general shifts of finite type.

However, analytic tools alone do not give much insight into how these local statistics depend on the forbidden pattern, or how the statistics of different shift spaces compare to one another. One of the motivating questions for this work was to decide the following question by combinatorial methods: in a sample sequence from the shift of finite type where $1001$ is forbidden, are $0$s or $1$s more frequent? (Jump to Section \ref{sec:freaksult} for the answer.) Generally, we have the following questions in mind: among all sets of forbidden patterns, which are the most or least costly (entropy-wise) to avoid? Which choices of forbidden patterns maximize or minimize the frequency of some fixed local pattern? And, can these questions be answered by constructing explicit embeddings or factor maps between shift spaces? Our main contributions are to give new combinatorial methods which can answer these questions, including by explicit injections and bijections.

In this work we focus on the case of binary words and shift spaces which forbid a single pattern over the base space $\mathbb{Z}$, since this is already a rich class. Many of our methods extend easily to other SFTs, and we believe that some of our constructions could be applied to higher dimensional shift spaces, a notoriously difficult case. We will stick with a discrete combinatorial viewpoint, always considering finite sequences, and often working with generating polynomials and functions. Our setup can be thought of as a statistical physics model with a hard potential, and from that perspective the frequency of 1s is typically called the {\em{magnetization}}. Questions related to optimizing local densities for a similar notion of pattern avoidance, which shares some features of our setting, was recently considered by Kenyon~\cite{Kenyon2026PatternsSequences}.

Our approach is closely related to existing work which describes the ordering of one-word SFTs by their entropy~\cite{GuibasOdlyzko1981StringOverlaps, ChandgotiaMarcusRicheyWu2026SinglePattern}. The {\em{entropy}} of a shift space, which can be thought of as the amount of randomness per symbol, is a key invariant for topological dynamical systems. The Ornstein Isomorphism theorem~\cite{Ornstein1970BernoulliShifts} states that among Bernoulli shift spaces -- random sequences generated from i.i.d. random data -- entropy is a complete invariant. The Bernoulli shifts include the measures of maximal entropy for the spaces we consider in this work (generally, any irreducible, mixing SFT), but the notion of isomorphism (measure-theoretic, between the measures of maximal entropy) is relatively weak, allowing measure zero defects.

A much stronger notion of isomorphism, which is relevant for our results, is {\em{topological conjugacy}}. For SFTs, a topological conjugacy is equivalent to a bijective sliding-block code, which can be thought of as a local, shift-commuting bijection~\cite{LindMarcus2021SymbolicDynamics}. Entropy is far from a complete invariant for topological conjugacy for SFTs in general, roughly speaking because conjugacies are rigid compared to measure-theoretic isomorphisms: they must be topological homeomorphisms everywhere, not just almost everywhere. Constructing explicit bijections between SFTs of the same entropy, or injections/surjections between SFTs of different entropy, which are in some sense locally computable -- or alternatively, disproving the existence of such maps -- remains an interesting and largely unexplored area. Theorems \ref{thm:redbeer} and \ref{thm:coffee} provide large classes of such injections and bijections (respectively) which behave nicely with respect to the frequency of $1$s. We note that although exact conditions (involving counting periodic points) for the existence of topological embeddings or factor maps are known~\cite[Theorems 10.1.1, 10.3.1]{LindMarcus2021SymbolicDynamics}, those conditions are not easy to check, and even if they are known to hold, the proofs therein do not easily allow one to exhibit explicit embeddings/factors.

By considering pattern densities from a purely statistical point of view, our work also sheds light on a new connection between properties of the measure of maximal entropy and statistics arising from a discrete combinatorial picture. Namely, Theorem \ref{thm:club} provides an explicit, quantitative connection between letter density in SFTs and a related notion of letter density, which arises from an i.i.d. sequence stopped when the forbidden word occurs (see Definition \ref{def:stop!}). While this alternate probabilistic definition measures a similar statistic in principle, and in many cases we can use the conclusion of Theorem \ref{thm:club} to prove an inequality between them, there are subtle differences between these statistics that we do not fully understand. Combining these various methods together, along with some generating function calculations, we are able to partially describe the total ordering of one-word SFTs by their letter densities, but a full description of the ordering or of the optimizers is still out of reach: see Section \ref{subsec:conj}. Of particular interest are the forbidden words which are balanced between 0s and 1s (like $1001$): among these, we prove a sufficient condition for the letter density to be exactly $1/2$ (Theorem \ref{thm:redbeer}), namely that every border of the forbidden word is itself balanced, and conjecture that it is necessary (Conjecture \ref{conj:crispy}).

\subsection{Overview} In Section \ref{sec:prelim} we state preliminary notation and definitions, collect basic facts and tools, and recall relevant previous work. In Section \ref{sec:freaksult} we include an explicit worked example for the forbidden pattern $1001$, discuss a natural, but false, monotonicity principle, and present our main theorems about the frequency of 1s in the case where a single binary word is forbidden. Section \ref{sec:proofs} contains the proofs of the main theorems. In Section \ref{sec:guessing} we present some worked examples and heuristics related to letter frequencies. Finally in Section \ref{sec:further} we summarize a number of questions left open by our results, including some ideas for future research.

\section{Preliminaries} \label{sec:prelim}

We begin by introducing notation for the classes of words and associated statistics we will study and recalling some basic facts about them. Throughout, a {\em{word}} is an element of $\{0, 1\}^k$ for some integer $k \geq 1$. The fundamental object of our study is the set of binary sequences which avoid a given word as a {\em{factor}}.

\begin{definition} \label{def:finefib} For positive integer $n \geq 1$ and any word $w$ of length $k$, denote by $\Omega_n^w$ the set of words of length $n$ with no $w$ {\bf{factor}}, i.e.
\begin{equation} \Omega_n^w = \{(\omega_1, \omega_2, \ldots, \omega_n) \in \{0, 1\}^n: \text{ for all } i \in [n-k+1], \omega_{[i, i+k-1]} \neq w \}, \end{equation}
where for any word $\omega$, $\omega_{[a,b]} = (\omega_a, \omega_{a+1}, \ldots, \omega_{b})$ if $a, b$ are valid indices for $\omega$. Denote the union over all $n$ by $\Omega^{w} = \bigcup_{n \geq 1} \Omega_n^{w}$. 

\end{definition}

By an elementary subadditivity argument, for fixed $w$, the sets $\Omega_{n}^{w}$ have an exponential growth rate: 

\begin{definition} For any word $w$, the {\bf{entropy}} $\lambda^w$ is defined as 
\begin{equation} \lambda^w = \lim_{n \to \infty} |\Omega_n^w|^{1/n} \in [1, 2). \end{equation}
\end{definition}

We are interested in counting the total number of $1$s among words in these classes. For a finite 
word $\omega$, we will use the notation $|\omega|_1 = \sum_{i \in [n]} \omega_i$ for the number of $1$s in $\omega$, and $|\omega|$ for the length of $\omega$, i.e. $|\omega| = n$ for $\omega \in \{0, 1\}^n$. We introduce notation for the level sets of $|\cdot|_1$: 

\begin{definition} For any word $w$ and positive integer $n, j \geq 1$, denote by $\Omega_{n,j}^w$ the subset of $\Omega_n^w$ of words with exactly $j$ $1$s:
\begin{equation} \Omega_{n,j}^w = \{\omega \in \Omega_{n}^w: |\omega|_1 = j\}. \end{equation}
 \end{definition}
 
The statistic we are primarily interested in is the asymptotic frequency of $1$s in a word drawn uniformly at random from $\Omega_n^w$ for large $n$.

\begin{definition} \label{def:rhos} For any word $w$ and positive integer $n$, denote by $\rho_n^w$ the frequency of $1$s over all words in $\Omega_n^w$: 

\begin{equation} \label{eq:def_rho} \rho_n^w = \frac{1}{n |\Omega_n^w|} \sum_{\omega \in \Omega_n^w} |\omega|_1 = \frac{1}{n |\Omega_n^w|}  \sum_{j=0}^n j |\Omega_{n,j}^w|. \end{equation}

Set $\rho^w = \lim_{n \to \infty} \rho_n^w \in [0,1]$ if it exists. \end{definition}

%
%
%
%
%

We note, but omit the details for brevity, that $\rho^w$ always exists and can be explicitly computed using the Parry measure ($=$ measure of maximal entropy) for the shift of finite type where $w$ is forbidden (see Appendix \ref{app:parryconv}). 

The last main character in our story arises from a probabilistic construction. 

\begin{definition} \label{def:stop!} Let $(X_n)_{n \in \N}$ be an i.i.d. sequence of Bernoulli$(1/2)$ random variables. Let $w$ be any word, and define $\tau^w \in \{1, 2, \ldots\}$ as the stopping time (w.r.t. the sequence $X$) given by  
\begin{equation} \tau^w = \min\{t \geq 1: (X_{t-k+1}, X_{t-k+2}, \ldots, X_t) = w\}. \end{equation}
For $t \in \N$ let $N(t) = \sum_{s = 1}^t X_s$ and let $Q(t) \in [0,1]$ be given by 
\begin{equation} Q(t) =\frac{N(t)}{t} = \frac{1}{t} |(X_1, \ldots, X_{t})|_1. \end{equation}
Then define the statistic $q^w = \E[Q(\tau^w)]$. 

\end{definition}

In words, $\tau^w$ records the first hitting time of the word $w$ by an i.i.d. sequence, and $Q(\tau^w)$ records the frequency of $1$s generated up to the first hitting time. Although $\E[N(\tau^w)] = \frac{1}{2}\E[\tau^w]$ by a standard martingale calculation, the ratio $Q$ and its expectation $q$ are not so easy to analyze. The tail of the random variable $\tau^w$ encodes the limiting growth rate of $\Omega_n^w$: it follows from the definitions that

\begin{equation} \p(\tau^{w} > n) = 2^{-n} |\Omega_n^w|, \end{equation}

and thus 

\begin{equation} \p(\tau^w > n) \asymp \left(\frac{\lambda^{w}}{2} \right)^n \text{ as } n \to \infty. \end{equation}

An important object is the following combinatorial polynomial, which captures the key information needed to determine $\lambda$, $\rho$ and $q$.

\begin{definition}[Borders and border polynomial] Fix two words $v, w$ with $v \in \{0,1\}^k$. Denote by $\mathcal{B}(v, w)$ the set of {\bf{borders}} of $v$ and $w$:
\begin{equation} \mathcal{B}(v,w) = \{w_{[1, j]}: j \in \{1, 2, \ldots, |w|\}, w_{[1, j]} = v_{[k-j+1, k]} \}. \end{equation}

Then define the {\bf{border polynomial}} $B^w(x,y)$ by 

\begin{equation} B^w(x, y) = \sum_{b \in \mathcal{B}(w, w)} x^{|b|} y^{|b|_1}. \end{equation}

\end{definition}

The quantities $\lambda, B$ and $\tau$ are connected by the following results. 

\begin{fact}[{\cite[Theorem 17.3.2]{LevinPeresWilmer2017MarkovChains}}] \label{fact:candy} For any word $w$, $B^w(2, 1) = \E \tau^w$. \end{fact}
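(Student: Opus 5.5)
We prove $\E\tau^w = B^w(2,1) = \sum_{b \in \mathcal{B}(w,w)} 2^{|b|}$ with the fair-game (gambler) martingale argument of Li and of Conway. Let $k = |w|$.

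\emph{The gamblers.} Before each time $s \geq 1$ a new gambler arrives with capital $1$. Gambler $s$ bets everything, at fair odds, that $X_s = w_1$. If he wins he holds $2$ and bets it all that $X_{s+1} = w_2$, and so on. He stops at his first loss, leaving with $0$, or after $k$ consecutive wins, leaving with $2^k$. Each bet is fair because $X$ is i.i.d.\ Bernoulli$(1/2)$. Hence, with $M_t$ the total capital held by all gamblers after time $t$, the process $M_t - t$ is a martingale for the natural filtration of $X$.

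\emph{Capital at the stopping time.} At time $\tau^w$ the only gamblers still holding money are those whose run of bets so far matches a suffix of $X_{[1,\tau^w]}$ that is also a prefix of $w$. Since $X_{[\tau^w-k+1,\tau^w]} = w$, these are the gamblers who entered at times $\tau^w - j + 1$ with $w_{[1,j]} = w_{[k-j+1,k]}$, i.e.\ exactly one gambler for each border $b = w_{[1,j]} \in \mathcal{B}(w,w)$, holding $2^{j}$. The gambler with $j = k$ has just completed the whole word. The condition $j \leq k$ matters, and it holds since $\tau^w \geq k$. Gamblers who entered earlier either lost already or would have completed $w$ before time $\tau^w$, which minimality of $\tau^w$ rules out. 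Therefore
\[ M_{\tau^w} = \sum_{b \in \mathcal{B}(w,w)} 2^{|b|} = B^w(2,1), \]
a deterministic quantity.

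\emph{Optional stopping.} $\tau^w$ has a geometric tail: in each disjoint block of length $k$ the word $w$ appears with probability $2^{-k}$, so $\E\tau^w < \infty$. The increments of $M_t - t$ are bounded by a constant depending only on $k$, since at most $k$ gamblers are active at any time, each holding at most $2^k$. These two facts justify optional stopping, giving
\[ \E[M_{\tau^w} - \tau^w] = M_0 - 0 = 0. \]
Hence $\E\tau^w = B^w(2,1)$.

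The main care point is the capital accounting at time $\tau^w$. It must be checked that surviving gamblers correspond bijectively to borders, including the full word as the border of length $k$, and that no gambler can be in a state beyond completing $w$. The integrability step is standard.
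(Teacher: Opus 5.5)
Your proof is correct and follows the same route as the paper. The paper gives no argument of its own: it cites Levin--Peres--Wilmer and says the identity comes from optional stopping applied to a martingale stopped at $\tau^w$, which is exactly your gambler martingale with surviving capital $\sum_{b \in \mathcal{B}(w,w)} 2^{|b|}$ at time $\tau^w$, justified by $\E\tau^w<\infty$ and bounded increments of $M_t - t$.
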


This nice calculation can be done with the optional stopping theorem applied to a clever martingale stopped at time $\tau^w$. The expected value of $\tau$ also controls the entropy $\lambda$: 

\begin{theorem}[{\cite[Sec.~7]{GuibasOdlyzko1981StringOverlaps}}] \label{thm:juicy} Let $v, w$ be any words. Then $\E \tau^w \leq \E \tau^{v}$ if and only if $\lambda^w \leq \lambda^v$ if and only if $|\Omega_n^w| \leq |\Omega_n^v|$ for all $n \geq |w| \wedge |v|$. The same holds with $\leq$ replaced by $<$. 
\end{theorem}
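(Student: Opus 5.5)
The strategy is to go through the Guibas--Odlyzko generating function for $|\Omega_n^w|$. Write $c^w(z) = B^w(z,1)$ for the autocorrelation polynomial of $w$, and let $k=|w|$. The standard cluster (or Goulden--Jackson) argument gives
\begin{equation*}
F^w(z) = \sum_{n\ge 0} |\Omega_n^w| z^n = \frac{c^w(z)}{z^k + (1-2z)c^w(z)}.
\end{equation*}
In terms of the stopping time, this says $\p(\tau^w > n) = 2^{-n}|\Omega_n^w|$. Plugging in $z=1/2$ recovers $\E\tau^w = \sum_n \p(\tau^w>n) = 2^k c^w(1/2) = B^w(2,1)$, which is Fact~\ref{fact:candy}.

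\textbf{Step 1: entropy.} Set $p_w(z) = z^k + (1-2z)c^w(z)$. Then $1/\lambda^w$ is the smallest positive root of $p_w$, and by Rouché this root is simple and the unique zero in a suitable disc. Equivalently, $\lambda^w$ is the largest real root of $f_w(x) = x^k p_w(1/x) = (x-2)\tilde c_w(x) + 1$, where $\tilde c_w(x) = x^{k-1}c^w(1/x)$. Note that $\tilde c_w(2) = 2^{k-1}c^w(1/2) = \E\tau^w/2$.

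\textbf{Step 2: comparing $\E\tau$ with $\lambda$.} Near $x=2$ we have $f_w(x) \approx 1 - (2-x)\E\tau^w/2$, so heuristically $\lambda^w \approx 2 - 2/\E\tau^w$. A larger $\E\tau$ therefore pushes the root closer to $2$.

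The difficulty is that this is only first order. Two words with $\E\tau^w < \E\tau^v$ have different polynomials $\tilde c$, so the comparison is not automatic. I would follow Guibas--Odlyzko. The set of possible correlations has strong structure: the periods of $w$ form a set closed under the Fine--Wilf constraints. When $|w|=|v|$, one shows that the correlation vectors ordered lexicographically (equivalently by $c(1/2)$, since the coefficients are $0/1$ and lexicographic order beats binary value) give $f_v - f_w$ a fixed sign on $[\lambda, 2]$. For words of different lengths, one pads and uses the same comparison.

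\textbf{Step 3: counts for all $n$.} The strongest claim, $|\Omega_n^w| \le |\Omega_n^v|$ for all $n \ge |w|\wedge|v|$, I would derive from Guibas--Odlyzko's direct result that the dominance of $c$ in the lexicographic/binary sense implies coefficientwise dominance. Concretely, write
\begin{equation*}
F^v - F^w = \frac{z^{k}\,(c^v - c^w)}{p_v\, p_w}
\end{equation*}
in the equal-length case, and show this has nonnegative coefficients, using a combinatorial injection or the recursion $|\Omega_n| = 2|\Omega_{n-1}| - (\text{number of words ending with the first occurrence of } w)$.

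The reverse implications follow by contraposition together with the strict versions. Equality of $\E\tau$ forces equal correlation (since $c(1/2)$ determines a $0/1$ vector of fixed length) and hence equal counts.

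\textbf{Main obstacle.} The coefficientwise comparison in Step 3 for all $n$ is the hard part. Since the paper cites it, I would invoke Guibas--Odlyzko Section~7 for that step rather than reprove it.
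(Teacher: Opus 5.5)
\textbf{Where the proposal fails: words of different lengths.} The paper gives no argument for this statement; it only cites Guibas--Odlyzko, Section~7. For two words of the \emph{same} length, your outline is consistent with that citation: the Guibas--Odlyzko generating function, a comparison of the dominant roots of $f_v$ and $f_w$, and the coefficientwise comparison for all $n$ deferred to the same source.

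The genuine gap is the sentence ``for words of different lengths, one pads and uses the same comparison''. No such reduction can exist, because the equivalences fail across lengths. Take $w=111$ and $v=1100$.
\begin{itemize}
\item Expected hitting times: $\E\tau^{111}=B^{111}(2,1)=8+4+2=14<16=\E\tau^{1100}$. In fact $|\Omega^{111}_n|<|\Omega^{1100}_n|$ for every $n\ge 3$.
\item Entropies: $\lambda^{111}=\lambda^{1100}$. Since $1100$ has no nontrivial borders, $\lambda^{1100}$ is the largest root of $x^4-2x^3+1=(x-1)(x^3-x^2-x-1)$, which is the tribonacci constant $\lambda^{111}$.
\end{itemize}
So the strict chain breaks at the entropy link. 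With the roles exchanged, $\lambda^{1100}\le\lambda^{111}$ holds, while $\E\tau^{1100}>\E\tau^{111}$ and $|\Omega^{1100}_3|=8>7=|\Omega^{111}_3|$. So the non-strict chain breaks as well. The same happens for $1^{k-1}$ against any border-free word of length $k$, via $x^k-2x^{k-1}+1=(x-1)(x^{k-1}-x^{k-2}-\cdots-1)$.

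\textbf{Where equal length enters your Step 2.} Since $\tilde c_w(x)=\sum_{b\in\mathcal{B}(w,w)}x^{|b|-1}$, for $|v|=|w|=k$ the full-word terms $x^{k-1}$ cancel in $f_v-f_w=(x-2)(\tilde c_v-\tilde c_w)$. Hence the highest differing coefficient sits in degree at most $k-2$. It controls the sign at $x=\lambda^w$ because $\lambda^w$ is at least the largest root of $x^{k-1}-x^{k-2}-\cdots-1$, i.e.\ at least $\lambda^{1^{k-1}}$.

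After padding, the top differing term has degree $k-1$, and this domination can fail with equality. In the example, $\tilde c_{1100}-\tilde c_{111}=x^3-x^2-x-1$ vanishes precisely at $\lambda^{111}$. Across lengths one only has $2^{|w|}\le\E\tau^w\le 2^{|w|+1}-2$. Thus $\E\tau$ is strictly ordered by length, while entropies of consecutive lengths can tie.

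The statement should therefore be restricted to $|v|=|w|$, which is the setting in which the correlation comparison works, rather than extended by padding.

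\textbf{Minor normalization slip.} With $c^w(z)=B^w(z,1)$, your displayed generating function and the identity $\E\tau^w=2^kc^w(1/2)$ are wrong; for $w=11$ the latter gives $3$ instead of $6$. The Guibas--Odlyzko correlation polynomial is $c^w(z)=z^kB^w(1/z,1)$.
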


This theorem gives a full description of how the spaces $\Omega^w$ are ordered by their entropies. One motivation of this work is to find a similar description of how the spaces $\Omega^w$ are ordered by their $\rho^w$ values, and which uses the polynomial $B$ in a simple way. One can always calculate the statistics $\lambda, \rho$ and $q$ to high precision by using explicit generating functional formulas involving $B$: 

\begin{fact} \label{fact:nerdcluster} For any word $w$, we have the exact generating function identity 
\begin{equation} \Omega^w(x,y) := \sum_{n, j \geq 0} |\Omega_{n,j}^w| x^n y^j = \left[1 - x(1+y) + B^w(x^{-1}, y^{-1})^{-1} \right]^{-1}. \end{equation}
Recall the notation from Definition \ref{def:stop!}. The joint probability generating function of $\tau$ and $N$ satisfies a similar exact formula:
\begin{equation} \Phi^w(x,y) := \E[x^{\tau^w} y^{N(\tau^w)}] = \left[1+\left(1-\frac{1}{2}x(1+y)\right)B^w(2x^{-1},y^{-1})\right]^{-1}. \end{equation}
\end{fact}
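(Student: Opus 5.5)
We would prove both identities with the standard ``first occurrence'' decomposition of Guibas and Odlyzko \cite{GuibasOdlyzko1981StringOverlaps}, keeping track of the number of $1$s as well as the length. Write $k=|w|$, $m=|w|_1$, $W=x^k y^m$ and $\widetilde B = B^w(x^{-1},y^{-1})$. Throughout, a word $\omega$ gets the weight $x^{|\omega|}y^{|\omega|_1}$. Let $A(x,y)=\Omega^w(x,y)$ be the generating function of words with no $w$ factor, including the empty word. Let $T(x,y)$ be the generating function of words that contain $w$ exactly once, as a suffix.

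\emph{First relation.} Take a word counted by $A$ and append one letter, $0$ or $1$; this multiplies the weight by $x(1+y)$. The result either still avoids $w$, or has $w$ exactly once, as a suffix. Conversely, every nonempty word counted by $A$ or by $T$ arises this way exactly once, by deleting its last letter. Hence
\begin{equation*}
1 + x(1+y)A = A + T .
\end{equation*}

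\emph{Second relation.} Take a word $\omega$ counted by $A$ and append a full copy of $w$, which multiplies the weight by $W$. Consider the first occurrence of $w$ in $\omega w$; it ends somewhere in the appended block, so it overlaps that block in a prefix $b$ of $w$ that is also a suffix of $w$. That is, $b\in\mathcal B(w,w)$, and $b=w$ is allowed. Cut $\omega w$ right after this first occurrence. The piece before the cut is counted by $T$. The piece after the cut is the remaining suffix of $w$, of length $k-|b|$, with weight $x^{k-|b|}y^{m-|b|_1} = W x^{-|b|}y^{-|b|_1}$. This gives a bijection, so
\begin{equation*}
AW = T\,W\sum_{b\in\mathcal B(w,w)} x^{-|b|}y^{-|b|_1},
\end{equation*}
that is, $A = T\widetilde B$. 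The step needing the most care is this bijection. In the forward direction one checks that the first occurrence cannot end inside $\omega$ (because $\omega$ avoids $w$), so the overlap really is a border. In the reverse direction one checks that the piece after the cut is a proper suffix of $w$, which is exactly the border condition, and that the reconstructed prefix $\omega$ avoids $w$.

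\emph{Solving.} Substitute $A=T\widetilde B$ into the first relation. This gives $T\bigl(1+(1-x(1+y))\widetilde B\bigr)=1$ and
\begin{equation*}
A = \frac{\widetilde B}{1+(1-x(1+y))\widetilde B} = \left[1-x(1+y)+\widetilde B^{-1}\right]^{-1},
\end{equation*}
which is the first identity.

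For $\Phi^w$, note that $\tau^w=n$ and $N(\tau^w)=j$ exactly when $(X_1,\dots,X_n)$ is one of the words counted by $T$ with length $n$ and $j$ ones. Each such word has probability $2^{-n}$. Therefore $\Phi^w(x,y)=T(x/2,y)$. Replacing $x$ by $x/2$ turns $\widetilde B$ into $B^w(2x^{-1},y^{-1})$ and $1-x(1+y)$ into $1-\tfrac12 x(1+y)$. The formula for $T$ then gives the second identity.

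Both identities are first proved as equalities of formal power series. They also hold analytically wherever the series converge: for $A$, when $|x|$ is smaller than $1/\lambda^w$ times a factor depending on $y$; for $\Phi^w$, when $|x|\le 1$ and $|y|\le1$.
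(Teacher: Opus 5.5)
Your proof is correct, but it follows a different route from the one the paper points to. The paper gives no argument beyond attributing both formulas to the Goulden--Jackson cluster expansion. You instead use the Guibas--Odlyzko first-occurrence decomposition, refined by the variable $y$.

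In the cluster method one writes $\Omega^w=[1-x(1+y)-C]^{-1}$. Here $C$ is the signed generating function of marked clusters: chains of overlapping occurrences of $w$, each marked occurrence contributing a factor $-1$. Summing the geometric series of overlaps gives
\[
C=-\frac{x^ky^m}{\sum_{b\in\mathcal{B}(w,w)}x^{k-|b|}y^{m-|b|_1}}=-\widetilde B^{-1},
\]
which is exactly the first identity. That method is mechanical and extends verbatim to arbitrary finite forbidden sets and letter weights, and it fits the cluster language the paper uses later. However, it is an inclusion--exclusion argument and only produces $\Omega^w$ directly. To reach $\Phi^w$ one still needs something like your relation $1+x(1+y)A=A+T$ together with $\Phi^w(x,y)=T(x/2,y)$.

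Your two bijections are sign-free and give $A$ and the first-passage series $T$ simultaneously from one $2\times 2$ linear system. This makes the probabilistic formula for $\Phi^w$ essentially immediate.

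One cosmetic point: $A=T\widetilde B$ is an identity of Laurent series, since $\widetilde B$ has negative exponents. If you keep it in the form $AW=T\cdot W\widetilde B$, where $W\widetilde B=\sum_{b}x^{k-|b|}y^{m-|b|_1}$ is a polynomial with constant term $1$, every manipulation is an honest power-series identity. The final denominator $W+(1-x(1+y))W\widetilde B$ then visibly has constant term $1$.
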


These formulas can be computed using the Goulden-Jackson cluster expansion method~\cite{GouldenJackson1979Cluster} (see also \cite{Gessel2022GouldenJackson, KupinYuster2010GJGeneralizations} for recent, similar applications of the method), and expressions for $\lambda^w$, $\rho^w$ and $q^w$ can be extracted from these generating functions by standard singularity analysis (see for example~\cite[Chapter I, Section 4]{FlajoletSedgewick2009AnalyticCombinatorics}). This gives the values of the entropy $\lambda$ and the frequency $\rho$ as polynomial and rational expressions (respectively) involving explicit degree $|w|$ polynomials, while for $q$ an integral is necessary, so transcendental functions can enter the picture. We demonstrate this for the golden mean shift; and for comparison, we do the same computations for $\lambda$ and $\rho$ using the Perron-Frobenius theorem (also known as the {\em{transfer matrix method}}).

\begin{example}[Golden mean shift] \label{ex:golden} For the forbidden word $w = 11$, we have 
\begin{equation} \Omega^{11}(x,y) = \frac{1+xy}{1-x-x^2y}, \Phi^{11}(x,y) = \frac{x^2y^2}{4-2x-x^2y}.\end{equation}
We compute $\lambda^{11} = \frac{1+\sqrt{5}}{2}$ as the inverse of the minimum positive real root of $1-x-x^2 = 0$ (the denominator of $\Omega^{11}(x, 1)$), 
\begin{equation} \rho^{11} = \lim_{n \to \infty} \frac{1}{n} \frac{[x^n] \partial_y \Omega^{11}(x,1)}{[x^n] \Omega^{11}(x,1)} = \frac{1}{\lambda^{11} (2\lambda^{11}-1)} = \frac{1}{2} - \frac{\sqrt{5}}{10} \approx .2764 , \end{equation}
and 
\begin{equation} q^{11} = \int_{0}^1 \frac{1}{x} \partial_y \Phi^{11}(x, 1) \, dx = \frac{1}{5} + \log 2 + \frac{3}{5\sqrt{5}} \log \frac{1+\sqrt{5}}{4+2\sqrt{5}} \approx .6349. \end{equation}
Alternatively, we explain how $\lambda$ and $\rho$ can be computed from the Parry measure for the golden mean shift, see Appendix \ref{app:parryconv} for more details. The minimal choice of transfer matrix is 
\begin{equation} A = \begin{pmatrix} 1&1 \\ 1 & 0 \end{pmatrix}. \end{equation} 
The matrix $A$ encodes $\Omega^{11}$ in the following sense. Let $G$ denote the graph with states $0$ and $1$ and adjacency matrix $A$ (with state $0$ corresponding to the first column/row of $A$). Then the set of all vertex paths of length $n+1$ in $G$ is in bijection with $\Omega^{11}_n$, where the bijection is: label each edge according to its terminus, and map a path to its sequence of edge labels. Note that sequences $\omega \in \Omega^{11}_{n,j}$ correspond to paths that visit vertex $1$ exactly $j$ times. 

The uniform measure on $\Omega^{11}_n$ for large $n$ induces a probability measure on $G$, called the Parry measure, which can be computed as follows. The matrix $A$ has maximum real eigenvalue $\lambda^{11} = \frac{1+\sqrt{5}}{2}$, and corresponding left/right eigenvectors $\ell = r^t = \begin{pmatrix} \lambda^{11} & 1 \end{pmatrix}$. The Parry measure $\pi$ is the probability vector 
\begin{equation} (\pi_i)_{i = 0, 1} = \left(\frac{\ell_i r_i}{\sum_j \ell_j r_j}\right)_{i = 0, 1} = \left(\frac{\lambda^{11}+1}{\lambda^{11}+2}, \frac{1}{\lambda^{11}+2}\right). \end{equation}
It follows from the (Birkhoff) ergodic theorem that $\rho^{11} = \pi_1 = \frac{1}{\lambda^{11}+2} = \frac{1}{2} - \frac{\sqrt{5}}{10}$.
\end{example}

An immediate corollary of Fact \ref{fact:nerdcluster} is that words with the same border polynomial have the same letter frequencies: 

\begin{corollary} \label{cor:lemonade} For words $w, v$ with $B^w = B^v$, and for every $n,j \in \N$ we have 
	\begin{equation} |\Omega_{n,j}^w| = |\Omega_{n,j}^v| \hspace{10pt} \text{and} \hspace{10pt} \p(\tau^w = n, N(n) = j) = \p(\tau^v = n, N(n) = j). \end{equation}
	In particular, $\lambda^w = \lambda^v$, $\rho^w = \rho^v,$ and $q^w = q^v$.  
\end{corollary}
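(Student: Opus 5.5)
The plan is to read everything off the two generating functions in Fact \ref{fact:nerdcluster}, which are built from $B^w$ and nothing else about $w$. First, from $B^w = B^v$ as polynomials in $x,y$ we get
\begin{equation} B^w(x^{-1},y^{-1}) = B^v(x^{-1},y^{-1}) \quad\text{and}\quad B^w(2x^{-1},y^{-1}) = B^v(2x^{-1},y^{-1}) \end{equation}
as rational functions. Substituting into the formulas of Fact \ref{fact:nerdcluster} then gives $\Omega^w(x,y) = \Omega^v(x,y)$ and $\Phi^w(x,y) = \Phi^v(x,y)$ as rational functions. Both sides are also formal power series in $x,y$: $\Omega$ by definition, and $\Phi$ because $\tau\ge1$ and $0\le N(\tau)\le\tau$. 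By uniqueness of power series expansions, the coefficients agree. That gives $|\Omega_{n,j}^w| = |\Omega_{n,j}^v|$ and $\p(\tau^w=n,N(n)=j) = \p(\tau^v=n,N(n)=j)$. Here we use $N(\tau)=N(n)$ on the event $\{\tau=n\}$.

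The only step needing care is checking that the rational expressions really expand as the stated power series around the origin, so that equal rational functions give equal coefficients. This is already implicit in Fact \ref{fact:nerdcluster}, which asserts the identity between the series and the rational function. So equality of the rational functions forces equality of the series.

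The remaining claims follow from the definitions. Summing over $j$ gives $|\Omega_n^w| = |\Omega_n^v|$ for every $n$, so $\lambda^w=\lambda^v$. Also $\sum_j j|\Omega_{n,j}^w| = \sum_j j|\Omega_{n,j}^v|$, so by \eqref{eq:def_rho} $\rho_n^w=\rho_n^v$ for all $n$, and hence $\rho^w=\rho^v$. Finally, $q^w = \sum_{n,j}\frac{j}{n}\,\p(\tau^w=n,N(n)=j)$. This series is absolutely convergent since each term lies in $[0,1]$ times a probability, so the equality of the joint laws gives $q^w=q^v$.

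I do not expect a real obstacle. The only point worth stating explicitly is the formal power series justification above.
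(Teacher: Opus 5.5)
Your proposal is correct and follows the paper's own route: the paper states this as an immediate corollary of Fact \ref{fact:nerdcluster}, since the formulas for $\Omega^w$ and $\Phi^w$ depend on $w$ only through $B^w$. Equal border polynomials therefore give identical generating functions, hence equal coefficients, and equality of $\lambda$, $\rho$ and $q$ then follows from the definitions exactly as you describe.
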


For example, since the border polynomial $B^w$ is invariant under reversing $w$, every word has the same $\lambda$, $\rho$ and $q$ values as its reversal. In general is not clear how to give a bijective proof of the conclusion in Corollary \ref{cor:lemonade}: Theorem \ref{thm:coffee} constructs such bijections for a large class. The converse of Corollary \ref{cor:lemonade} is false: as we will see, the words $1010$ and $1100$ have frequencies $\rho$ and $q$ both equal to $1/2$. See Section \ref{sec:further} for further discussion.

\section{Results for Letter Frequencies} \label{sec:freaksult}

We now turn to the central theme of this work, which is to understand how the letter frequencies $\rho^w$ and $q^w$ are ordered over all words $w$. To get a feel for what this ordering looks like, the minimal interesting example is among words of length four with exactly two $1$s. By elementary symmetry observations (see Fact \ref{fact:tea}), the words $1100$ and $1010$ (and their reversals) have $\rho$ and $q$ values 1/2, while by an explicit computation (for example, using the generating function $\Omega$), 
$$\rho^{1001} = \frac{1}{220} \left( 11\left(5 + \sqrt{5}\right) + \sqrt{110\left(1 + 3\sqrt{5}\right)} \right) \approx 0.494161,$$

and $\rho^{0110} = 1-\rho^{1001}$ by symmetry. An exact formula is 
\begin{equation*} \rho^{1001} = \frac{\lambda^2}{-\lambda^3+5\lambda^2-\lambda-2} \text{ where } \lambda = \lambda^{1001} \text{ is the maximum real root of } \lambda^4 - 2\lambda^3 + \lambda - 1 = 0. \end{equation*}

Alternatively, we present a combinatorial proof of the fact that $\rho^{1001} < 1/2$. 

\begin{proof} \label{pf:1001} Let $A_n$ be the set of all 1001-avoiding words of length $n$ in which one $1$ is marked (we denote the mark with a hat, $\widehat{1}$ or $\widehat{0}$). Let $B_n$ be the set of all 1001-avoiding words of length $n$ in which one $0$ is marked. We will construct an injection $f:A_n \rightarrow B_n$ as follows. Let $\alpha \in A_n$, and construct $f(\alpha) \in B_n$ depending on the configuration near the $\widehat{1}$:
	\begin{itemize}
		\item ($\widehat{1} \mapsto \widehat{0}$) If we can change the $\widehat{1}$ into a $\widehat{0}$ while keeping it in place without creating a 1001-factor in $\alpha$, we make that change. Note that $f(\alpha)$ will contain a $\widehat{0}$ that is not the first entry of a 0001-factor, or the last entry of a 1000-factor.
		\item ($10\widehat{1}1 \mapsto 100\widehat{0}$) If the $\widehat{1}$ is the third entry of a 1011-factor in $\alpha$, then we cannot proceed as above. In this case we change both the $\widehat{1}$ and the $1$ after that to $0$, and mark the second one. So $10\widehat{1}1$ becomes $100\widehat{0}$.
		\item ($1\widehat{1}01 \mapsto \widehat{0}001$) Similarly, if the $\widehat{1}$ is the second entry of a $1\widehat{1}01$ factor, then change both the $\widehat{1}$ and the $1$ before it to a $0$ and mark the one on the left. So $1\widehat{1}01$ becomes $\widehat{0}001$.
	\end{itemize}
	
	We will now prove that the map $f:A_n \rightarrow B_n$ defined above is an injection. Let $\beta \in B_n$. Consider the position of the $\widehat{0}$ in $\beta$.
	
	\begin{itemize}
		\item If the $\widehat{0}$ is not the first entry of a 0001-factor or the last entry of a 1000-factor, then $f(\alpha) = \beta$ is only possible for some $\alpha$ if the first rule was used, and therefore the only preimage of $\beta$ is obtained by changing its $\widehat{0}$ to a $\widehat{1}$.
		\item If the $\widehat{0}$ is the last entry of a 1000-factor, then the second rule was used. We recover the unique preimage of $\beta$ by changing the ending $0\widehat{0}$ of that factor into $\widehat{1}1$.
		\item If the $\widehat{0}$ is the first entry of a 0001-factor, then the third rule was used. We recover the unique preimage of $\beta$ by changing the starting $\widehat{0}$ of that factor into $1\widehat{1}$.
	\end{itemize}
	
	Since $f$ is an injection, and $|A_n| = \sum_j j |\Omega_n^{1001}|$, we obtain that $\rho_n^{1001} \leq 1/2$ for all $n$. In order to prove that the inequality is sharp when $n\geq 7$ and in the limit, note that if the unique marked $\widehat{0}$ of a word $\beta \in B_n$ is the middle letter of a $100\widehat{0}001$-factor, then $\beta$ has no preimage. Indeed, it is easy to verify that in this case, trying to apply any of the three reverse rules creates a word that contains a 1001-factor. One can show that the set of words $\omega \in \Omega_n^{1001}$ containing a factor of $1000001$ is a positive fraction (uniformly over $n$) of all such words. It follows that $f$ misses a positive fraction of $B_n$ (uniformly over $n$), and thus $\rho^{1001} < 1/2$. \end{proof}

This construction is a special case of Theorem \ref{thm:redbeer}: the function $f$ defined here is exactly $\varphi_{1001}$. Also, combining this injection with Corollary \ref{cor:kave}, we find that $q^{1001} > 1/2$. 

\subsection{A Monotonicity Principle}\label{subsec:conj} 
We focus on the ordering of frequency values among words of the same length, and begin by mentioning a natural monotonicity principle describing this ordering, which we wrote as a conjecture in the original version of this paper, but which turned out to be false. The monotonicity principle is the rule of thumb that the more $1$s in a word $w$, the smaller the $\rho^w$ value should be, and the larger the $q^w$ value should be. In other words, one might guess that $\rho^w$ is monotone decreasing with respect to $|w|_1$, and $q^w$ is monotone increasing with respect to $|w|_1$. Here is a simple heuristic justifying this guess. We can think of forming the sets $\Omega^v$ and $\Omega^w$ by starting with all binary words, and removing those which have at least one copy of $v$ or $w$, respectively. If $v$ has fewer $1$s than $w$, then it is intuitive that fewer $1$s are removed to form $\Omega^v$ than to form $\Omega^w$, thus leaving a higher density of $1$s in $\Omega^v$ than in $\Omega^w$. We expect the ordering by $q$ values to be exactly the opposite because $q$ measures the frequency of $1$s over words that have exactly one copy of the forbidden word (at the end), so in a sense $q^w$ lives in the complement of $\Omega^w$. 

This heuristic, while somewhat appealing, hides an important subtlety: the statistic $\rho$ is normalized by the size of $\Omega$, which depends on the forbidden word through the entropy $\lambda$, which is not at all monotone in $|w|_1$. Write $\mathcal{R}_k$ for the set of four words $1^{k-1}0, 10^{k-1}, 0^{k-1}1, 01^{k-1}$, and $\mathcal{R} = \cup_{k \geq 2} \mathcal{R}_k$. The words in $\mathcal{R}$ and the constant words are easily seen to violate the monotonicity principle: by direct calculations, for $k \geq 6$, 

\begin{equation}
	\rho^{1^{k-1}0} < \rho^{1^k}, \qquad q^{1^{k-1}0} > q^{1^k}, \qquad \rho^{0^k} < \rho^{101^{k-2}}, \qquad q^{0^k} > q^{010^{k-2}}
\end{equation}

Morally, these six words cause trouble because the words in $\mathcal{R}_k$ are essentially duplicates of the constant words $1^{k-1}$ and $0^{k-1}$, which have length $k-1$ instead of $k$. For example, forbidding $01^{k-1}$ is essentially equivalent to forbidding $1^{k-1}$ in the sense that their generating function asymptotics and in particular all their statistics $\lambda$ and $\rho$, are equal. (An equivalent definition of $\mathcal{R}$ is those words which, when forbidden, do not give an irreducible shift of finite type.) 

Even excluding the constant words and words in $\mathcal{R}$, the principle appears to be false in general, although one has to look at large words to see the first counterexample. Working with ChatGPT 5.6 a few months after the original version of this paper was uploaded to the arXiv, we found a class of candidate pairs (of length $k \geq 41$) which appear to violate the monotonicity principle, i.e. pairs of words $v,w$ such that $|v|_1  < |w_1|$, $\rho^v < \rho^w$ and $q^v > q^w$. We plan to present proofs and a general quantitative description of these counterexamples, which appear to be rare, in forthcoming work.

Theorem \ref{thm:coffee} and Proposition \ref{prop:selfless} establish the monotonicity principle for large families of pairs $(v,w)$ with the same entropy $\lambda^w = \lambda^v$, while Theorem \ref{thm:club} relates the ordering by $q$ values to the ordering by $\rho$ values. Describing the full orderings would also require understanding the situation among words with the same length {\em{and}} the same number of $1$s, where different values can occur. At this level of detail, the comparison between $\rho$ and $q$ breaks down (an example is the pair $v = 10001$, $w = 01010$, where both $\rho^v > \rho^w$ and $q^v > q^w$). We are still lacking a simple conjectural description of the ordering in these cases. See Section \ref{sec:further} for further discussion.


\subsection{Main Results}

We now present our main results, which give proofs of inequalities between letter frequencies of different words, in many cases by constructing explicit, novel bijections. In particular, we establish that the monotonicity principle holds in the case where there is nice cross-border structure between the two words, and among the class of words which have no non-trivial borders. We also describe a general method which allows us to directly compare the frequencies of $0$s and $1$s, and thus to test whether $\rho^w$ is less than or equal to $1/2$. We use this method to partially classify words with $\rho = 1/2$, and prove $\rho < 1/2$ for a large class.

As a warmup, we point out one easy symmetry present in our setup: 

\begin{fact} \label{fact:tea} Let $w$ be any word whose reversal is equal to its bitflip. Then $\rho^w = q^w = 1/2$. \end{fact}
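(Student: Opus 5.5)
The plan is to combine two elementary symmetries of the setup: reversal and bitflip. Write $\overline{w}$ for the bitflip of $w$ (exchange $0 \leftrightarrow 1$ in every position) and $w^R$ for the reversal of $w$. The hypothesis is $w^R = \overline{w}$.

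\emph{Reversal.} The map $\omega \mapsto \omega^R$ is a bijection from $\Omega_{n,j}^w$ onto $\Omega_{n,j}^{w^R}$. It preserves length and the number of $1$s, and $\omega$ has a $w$ factor if and only if $\omega^R$ has a $w^R$ factor. For the stopping-time statistic I would not try to reverse the process. Instead I would use Corollary \ref{cor:lemonade}: the border polynomial is invariant under reversal, as noted after that corollary, so $B^w = B^{w^R}$, and hence $\p(\tau^w = n, N(n) = j) = \p(\tau^{w^R} = n, N(n) = j)$ for all $n,j$.

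\emph{Bitflip.} The map $\omega \mapsto \overline{\omega}$ is a bijection from $\Omega_{n,j}^w$ onto $\Omega_{n,n-j}^{\overline{w}}$. For the random process, the sequence $(1-X_t)_t$ is again i.i.d.\ Bernoulli$(1/2)$. Its first hitting time of $\overline{w}$ equals $\tau^w$ computed from $X$, and its count of $1$s up to that time is $\tau^w - N(\tau^w)$. Therefore $\p(\tau^{\overline{w}} = n, N(n) = j) = \p(\tau^w = n, N(n) = n-j)$.

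\emph{Combining.} Since $w^R = \overline{w}$, the two steps give $|\Omega_{n,j}^w| = |\Omega_{n,n-j}^w|$, and likewise the joint law of $(\tau^w, N(\tau^w))$ is invariant under $(n,j) \mapsto (n, n-j)$.
\begin{itemize}
\item From the first identity, $\sum_j j|\Omega_{n,j}^w| = \sum_j (n-j)|\Omega_{n,j}^w|$. This gives $\rho_n^w = 1/2$ for every $n$, hence $\rho^w = 1/2$.
\item From the second, $Q(\tau^w)$ and $1 - Q(\tau^w)$ have the same distribution. Since $Q \in [0,1]$ is bounded, $\E Q(\tau^w) = 1/2$, which is $q^w = 1/2$. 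Here $\tau^w < \infty$ almost surely by Fact \ref{fact:candy}.
\end{itemize}

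The only potentially delicate point is the reversal step for $q$. A time-reversal of the stopped i.i.d.\ path does not obviously correspond to a stopped path, which is why I route this step through Corollary \ref{cor:lemonade} rather than through a direct bijection. Everything else is routine.
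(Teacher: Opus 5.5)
Your proof is correct. The $\rho$ half is the same as the paper's: the composite Reverse $\circ$ Bitflip is an involution of $\Omega_n^w$ sending $\Omega_{n,j}^w$ to $\Omega_{n,n-j}^w$.

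For $q$ your route differs. The paper gives no direct argument at this point and defers to Proposition \ref{prop:baboosh}. There, a bitflip-reversal-symmetric word has balanced borders, so every monomial of $B^w$ has the form $x^{2m}y^m$. The closed form of $\Phi^w$ from Fact \ref{fact:nerdcluster} is then checked to be invariant under $(x,y)\mapsto(xy,y^{-1})$.

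You instead split the symmetry into two halves. The reversal half, the only place the cluster formula is really needed, comes from Corollary \ref{cor:lemonade} via $B^w=B^{w^R}$. The bitflip half comes from the elementary coupling $X_t\mapsto 1-X_t$.

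The two arguments are closer than they look. Your coupling identity says exactly $\Phi^{\overline{w}}(x,y)=\Phi^w(xy,y^{-1})$, so the paper's substitution is the generating-function form of your bitflip step. The paper's route buys generality: it covers all balanced-border words, and it gives the converse via uniqueness of analytic functions. Yours is more modular and avoids verifying the substitution invariance of the explicit formula.

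Your template also extends. Balanced borders is precisely the condition $B^{\overline{w}}=B^w$, since the borders of $\overline{w}$ are the bitflips of those of $w$. So applying Corollary \ref{cor:lemonade} with $\overline{w}$ in place of $w^R$, and then your coupling, gives the forward direction of Proposition \ref{prop:baboosh} directly.
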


There is a simple bijective proof for $\rho$, but for $q$ we only know a generating function proof, which we defer until Proposition \ref{prop:baboosh}. 

\begin{proof} The map Reverse $\circ$ Bitflip is an involution on $\Omega_n^w$ which restricts to a bijection $\Omega_{n,j}^w \to \Omega_{n, n-j}^w$ for all $n, j$, since if $\omega$ avoids $w$, then the bitflip-reversal of $\omega$ avoids the bitflip-reversal of $w$, which is equal to $w$. \end{proof} 

As a small example, the words $1010$ and $1100$ are the only words of length $4$ that have this property. While bitflip-reversal symmetry is nice and gives a simple bijection, it applies in a very small proportion of cases: among words of even length $2k$, there are exactly $2^k$ many such words, an asymptotically vanishing fraction of the total. Consider the following (strictly weaker) property: 

\begin{definition} A word $w$ has {\bf{balanced borders}} if for all $b \in \mathcal{B}(w,w)$, $|b| = 2|b|_1$. \end{definition}

Any word that is its own bitflip reversal is easily seen to have balanced borders. An example of a word with balanced borders, but that is not its own bitflip-reversal, is $10011010$. Note that, for example, $11000011$ does not have balanced borders, even though the word itself is balanced. The balanced borders property is equivalent to the following strong symmetry property: 

\begin{proposition} \label{prop:baboosh} A word $w$ has balanced borders if and only if for all $n, j \in \N$, 
	\begin{equation} |\Omega^w_{n,j}| = |\Omega^w_{n,n-j}| \text{ and } \p(\tau^w = n, N^w = j) = \p(\tau^w = n, N^w = n-j).
	\end{equation}
	In particular, if $w$ has balanced borders, then $\rho^w = q^w = \frac{1}{2}$. 
\end{proposition}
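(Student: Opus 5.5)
The plan is to work entirely at the level of the generating functions in Fact~\ref{fact:nerdcluster}, by translating the symmetry $j \leftrightarrow n-j$ into a substitution of variables. For a bivariate series $F(x,y)=\sum_{n,j} c_{n,j}x^ny^j$, the statement $c_{n,j}=c_{n,n-j}$ for all $n,j$ is equivalent to the identity $F(x,y)=F(xy,y^{-1})$. Indeed, $F(xy,y^{-1})=\sum c_{n,j}x^ny^{n-j}$, and both sides live in $\mathbb{Z}[y,y^{-1}][[x]]$ since $0\le j\le n$. So the first claim is equivalent to $\Omega^w(x,y)=\Omega^w(xy,y^{-1})$, and the second to $\Phi^w(x,y)=\Phi^w(xy,y^{-1})$.

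For the forward direction I will plug the substitution into the closed forms of Fact~\ref{fact:nerdcluster}. In $\Omega^w$, the term $1-x(1+y)$ becomes $1-xy(1+y^{-1})=1-x(1+y)$, so it is invariant. The border term is $B^w(x^{-1},y^{-1})=\sum_b x^{-|b|}y^{-|b|_1}$, which becomes
\begin{equation*}
B^w\bigl((xy)^{-1},y\bigr)=\sum_{b} x^{-|b|}y^{-(|b|-|b|_1)}.
\end{equation*}
Under balanced borders we have $|b|-|b|_1=|b|_1$, so this term is also unchanged. Likewise, in $\Phi^w$ the factor $1-\frac12 x(1+y)$ is invariant, and $B^w(2x^{-1},y^{-1})=\sum_b 2^{|b|}x^{-|b|}y^{-|b|_1}$ transforms to $\sum_b 2^{|b|}x^{-|b|}y^{-(|b|-|b|_1)}$, which is again equal under balanced borders. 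This gives both symmetries.

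For the converse, assume either symmetry holds; I will use $\Omega^w$, and the argument for $\Phi^w$ is identical. Taking reciprocals of the two closed forms and cancelling the invariant term gives the equality of rational functions $B^w(x^{-1},y^{-1})^{-1}=B^w((xy)^{-1},y)^{-1}$. Hence the two Laurent polynomials $\sum_b x^{-|b|}y^{-|b|_1}$ and $\sum_b x^{-|b|}y^{-(|b|-|b|_1)}$ coincide. Every border is determined by its length, so each power $x^{-\ell}$ carries exactly one monomial on each side. Comparing coefficients then forces $|b|_1=|b|-|b|_1$ for every border $b$, which is exactly the balanced-borders condition. This coefficient comparison is the only step needing any care, and the uniqueness of a border of each length makes it immediate. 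The only subtle point is justifying that the formal-series identity may be inverted; this holds because these are rational functions with nonzero constant term in the appropriate ring.

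Finally, I will derive the consequence. Symmetry of $|\Omega^w_{n,j}|$ gives $\sum_j j|\Omega^w_{n,j}|=\sum_j (n-j)|\Omega^w_{n,j}|$, so $\rho^w_n=\tfrac12$ for every $n$, and hence $\rho^w=\tfrac12$. Since $\tau^w<\infty$ almost surely, the symmetry of the joint law of $(\tau^w,N(\tau^w))$ shows that $N(\tau^w)/\tau^w$ and $1-N(\tau^w)/\tau^w$ have the same distribution. Therefore $q^w=1-q^w$, that is, $q^w=\tfrac12$.
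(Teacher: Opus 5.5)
Your proof is correct and follows the same route as the paper: the closed forms in Fact~\ref{fact:nerdcluster} are invariant under $(x,y)\mapsto(xy,y^{-1})$ exactly when every border satisfies $|b|=2|b|_1$. Your converse is more complete than the paper's one-line appeal to uniqueness of analytic functions. You explicitly invert, cancel the invariant term to recover $B^w$, and compare coefficients using that each border is determined by its length, which also shows that either symmetry alone already forces balanced borders.
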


\begin{proof}[Proof of Proposition \ref{prop:baboosh}] We use the explicit formulas for the generating functions in Fact \ref{fact:nerdcluster}. Observe that any monomial of the form $x^{2k} y^k$ for $k \in \N$ is invariant under the transform $(x,y) \mapsto (xy, y^{-1})$. The term $x(1+y)$ is also invariant under this transform. It follows that $B^w$, and thus also $\Omega^w$ and $\Phi^w$, are invariant under this transform if $w$ has balanced borders. The same transform implies the stated symmetry of the coefficients. Since the series $\Omega$ and $\Phi$ both have positive radius convergence as two-variable real analytic functions -- specifically, $\Omega^w$ converges for $x < (\lambda^w)^{-1}$ and $y \leq 1$, and $\Phi^w$ converges for $x < 2(\lambda^w)^{-1}$ and $y \leq 1$ -- uniqueness of analytic functions gives the reverse implication. \end{proof}

We now state our two main theorems, which give explicit injections (and sometimes bijections) between different spaces $\Omega$. Here we do the minimum amount of setup necessary to describe the results, and postpone the details of constructing the relevant maps $\psi$ and $\varphi$ to Section \ref{sec:clusters}. We will need a marked version of the spaces $\Omega$ (akin to marking a single $0$ or $1$):

\begin{definition} \label{def:mark1} For a word $w$ and $\epsilon \in \{0,1\}$, let $\Omega_n^w(\epsilon)$ denote the set of sequences in $\Omega_n^w$ where a single index is marked:
\begin{equation} \Omega_n^w(\epsilon) = \{(\omega, j) \in \Omega_n^w \times [n]: \omega_j = \epsilon\}. \end{equation}
\end{definition} 

\begin{theorem}\label{thm:redbeer} Let $w$ be any word such that for all $b \in \mathcal{B}(w,w)$, $2|b|_1 \geq |b|$. Then there exists an explicit length-preserving injection $\varphi_w: \Omega^w(1) \to \Omega^w(0)$ with coding radius $|w|$. If strict inequality $2|b|_1 > |b|$ holds for some $b \in \mathcal{B}(w,w)$, then for every $n \in \N$, there exists a subset $A = A(n,w) \subset \Omega_n^w(0)$ and a constant $c = c(w) > 0$ such that
\begin{equation} |A| \geq c |\Omega_n^w(0)| \text{ and } A \cap \text{Range}(\varphi_w) = \emptyset. \end{equation}
In particular, $\rho^w \leq \frac{1}{2}$. If strict inequality holds in the assumption for some border $b$, then $\rho^w < \frac{1}{2}$. 
\end{theorem}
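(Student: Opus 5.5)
The plan is to generalize the three-rule injection given above for $1001$. For $(\omega,j)\in\Omega_n^w(1)$ we flip the marked $1$ to $0$. If this creates no occurrence of $w$, we output the result with the mark kept in place. Otherwise every new occurrence of $w$ covers position $j$, at some relative position $p$ with $w_p=0$. We destroy it by also turning a designated $1$ of that occurrence into a $0$ and moving the mark onto it. In the $1001$ case this designated $1$ is the first or last letter of the created occurrence, according to which side of $j$ it lies on; in general we fix a canonical choice depending only on $(w,p)$. The point of this choice is that the output carries a \emph{certificate}: a short window around the new mark matching a prescribed $0$-heavy pattern (such as $1000$ or $0001$ for $1001$), which the decoder can read off.

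The second flip may create further occurrences of $w$. So we iterate, and we organize the iteration through the cluster structure of overlapping occurrences of $w$, which is what the border set $\mathcal{B}(w,w)$ controls. Concretely, two created occurrences can overlap only along a border $b\in\mathcal{B}(w,w)$. Each correction step turns $1$s into $0$s inside such overlaps, and the hypothesis $2|b|_1\ge |b|$ guarantees that an overlap region contains at least as many $1$s as $0$s. We will use this to choose a $1$ to flip at each step, and to show that the process stops within a window of length $|w|$ around the original mark. That window is what gives coding radius $|w|$. Length preservation is automatic, and we will check separately that the output avoids $w$.

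Injectivity comes from an explicit decoder, exactly as in the $1001$ proof. Given $(\beta,i)\in\Omega_n^w(0)$, we look at the radius-$|w|$ window around $i$. We determine which rule, if any, could have produced it: either only the trivial flip, or the certificate of a particular correction sequence. We then undo it. The key claim is that these certificates are mutually exclusive, and disjoint from the situation ``flipping $\widehat{0}\to\widehat 1$ directly yields a $w$-free word''. I expect this to be the main obstacle. The case analysis must be organized uniformly in $w$, presumably by the position $p$ and the border lengths, and this is exactly where the border hypothesis is used: with an unbalanced border of the wrong sign, two different preimages would produce the same window. I would first try to prove the claim by induction on the number of borders, treating the borderless case (one correction step) as the base case.

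For strictness, suppose $2|b|_1>|b|$ for some border $b$. We exhibit a local pattern $P$ containing a marked $0$ that lies in no certificate and for which flipping the mark back creates $w$; this is the analogue of $100\widehat0001$ for $1001$. The natural candidate is a long run of $0$s padded appropriately, where the strict surplus of $1$s in $b$ rules out every reverse rule. Let $A$ be the set of marked words containing $P$ around the mark. By the Parry measure and mixing of the irreducible SFT (Appendix~\ref{app:parryconv}), or a direct gluing argument, $P$ occurs at a positive fraction of positions, so $|A|\ge c|\Omega_n^w(0)|$. Finally, since $|\Omega_n^w(1)|=\sum_\omega|\omega|_1$ and $|\Omega_n^w(0)|=\sum_\omega(n-|\omega|_1)$, the injection gives $\rho_n^w\le 1/2$. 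The missing set $A$ gives $\rho_n^w\le 1/2-c'$ uniformly in $n$, and passing to the limit gives $\rho^w<1/2$.
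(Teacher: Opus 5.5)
Your proposal describes a strategy, but every step that carries the weight of the theorem is asserted or deferred, so there is a genuine gap.

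\emph{The correction rule is not well defined.} Flipping the marked $1$ can create several overlapping occurrences of $w$ through $j$, at different relative positions. For $w=1010$, flipping the mark in $101\widehat{1}10$ creates occurrences in which the mark sits at positions $4$ and $2$. A $1$ designated from $(w,p)$ for one occurrence need not lie in the others.

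\emph{Termination within a window of length $|w|$ is not automatic.} For $w=1100$ with the rule ``flip the second letter of the created occurrence'', the input $1^m\widehat{1}0$ triggers a cascade through the whole run of $1$s and ends at $1\widehat{0}0^m$.

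\emph{Injectivity is left open.} The mutual exclusivity of your certificates is exactly what you call ``the main obstacle'', and you offer only a hoped-for induction on the number of borders. Once an unbounded iteration is allowed, the decoder must recover how many flips occurred and where. Nothing in the proposal shows this can be done.

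The idea you are missing for the first and third points is the paper's compact-cluster argument. Since $\omega$ is $w$-free, every occurrence of $w$ in $\omega^0$ contains $j$. So together they form a compact cluster $c$, and all of them contain its common intersection $I(c)$, a copy of a border $b\in\mathcal{B}(w,w)$ containing $j$. The hypothesis is used exactly once: for each compact cluster, fix an injective matching $M_c$ from the $0$-positions of $I(c)$ to its $1$-positions. Flipping the single $1$ at $M_c(j)$ destroys all created occurrences at once, and the change stays inside $I(c)$. Decoding is simple. Flip the marked $0$ back to $1$; if no $w$ appears you were in the trivial case. Otherwise the occurrences through the mark return $c$, and the original mark is $M_c^{-1}$ of the current one.

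For strictness, the missed words are those where flipping the mark back creates some $c$ with the mark at a $1$-position of $I(c)$ outside the range of $M_c$. Such positions exist when $b$ is strictly $1$-heavy. For $1001$ this gives $100\widehat{0}001$, but for $1011$ it gives $101\widehat{0}011$, so the missed pattern is not a run of $0$s in general.

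That said, your worry that the second flip can create a \emph{fresh} occurrence is well founded, and the paper's proof only asserts that the image lies in $\Omega^w(0)$. For $w=1100$, the $0$ at position $3$ of $I(w)=w$ must be matched to position $1$ or $2$. These two choices send $11011\widehat{1}0\mapsto 110\widehat{0}100$ and $111\widehat{1}0\mapsto 11\widehat{0}00$ respectively, and both outputs contain $1100$.

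So the cluster-and-matching device settles the simultaneous-occurrence and injectivity issues. The step ``the output avoids $w$'' still needs a genuinely new argument in either approach, and your proposal does not provide one. Only your final counting step goes through as written: an injection with a positive-density complement gives $\rho_n^w\le\frac12$, resp.\ $\rho^w<\frac12$.
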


Here the {\em{coding radius}} being $|w|$ means that for any $\omega \in \Omega_n^w(1)$ and $i \in [n]$, $\varphi_w(\omega)_i$ is a function of only the position of the mark and the
$2|w|$ digits $\omega_{i-|w|+1}, \omega_{i-|w|+2}, \ldots, \omega_{i + |w|}$. 
In fact, it is a fixed function (not depending on $i$) of those digits and the relative position of the mark in this interval. (In the language of symbolic dynamics, $\varphi$ is like a sliding-block code.)


Applying Theorem \ref{thm:redbeer}, and again the same theorem with $0$s and $1$s exchanged, gives a bijective proof of Proposition \ref{prop:baboosh}. Computer computations suggest that the condition in Theorem \ref{thm:redbeer} can be applied to around $85\%$ of words of (large) even length and $80\%$ of words of (large) odd length, in the sense that either the condition $2|b|_1 \leq |b|$ holds for all $b$ or the same condition holds for the bitflip of $w$ as $|w| \to \infty$. Among the balanced words $w$, i.e. satisfying $2|w|_1 = |w|$, of large (necessarily even) length, this asymptotic proportion is around $99.6\%$. 


On the other hand, there are interesting examples where Theorem \ref{thm:redbeer} cannot be applied: the minimal example is $10001$, which itself is $0$-heavy, but has the border $1$ which is $1$-heavy. An example of a balanced word where the theorem cannot be applied is $100011110001$, which has correlation polynomial $B^v(x,y) = x^{12}y^6 + x^5y^2 + xy$, with one $1$-heavy border (of length 1) and one $0$-heavy border (of length $5$). (Still, in this case, the proof of Theorem \ref{thm:redbeer} -- in particular, the construction of the map $\varphi$ -- suggests a natural guess for whether $\rho > $ or $< 1/2$: see Remark \ref{rem:guess}.)

Next, we present a result which establishes the monotonicity principle under an assumption on the borders of $v, w$.

\begin{theorem} \label{thm:coffee} Let $v, w$ be any two different words with $|v| = |w|$ and $|v|_1 \geq |w|_1$. Assume that
\begin{equation} \mathcal{B}(v, v) \setminus \{v\} = \mathcal{B}(w,w) \setminus \{w\}. \end{equation}
Then there exists an explicit, length-preserving bijection $\kappa_{v,w}: \Omega^w \to \Omega^v$ such that $|\omega|_1 \geq |\kappa_{v,w}(\omega)|_1$ for all $\omega \in \Omega^w$. 

If in addition 
\begin{equation} \mathcal{B}(v, v) \setminus \{v\} = \mathcal{B}(w,w) \setminus \{w\} = \mathcal{B}(v,w) =  \mathcal{B}(w,v), \end{equation}
then there exists a (different) bijection $\psi_{v,w}: \Omega^w \to \Omega^v$ with all the same properties as $\kappa$, plus $\psi_{v,w}$ has bounded coding radius.

In either case, $\rho^v \leq \rho^w$. Moreover, if $|v|_1 > |w|_1$, then $\rho^v < \rho^w$. 
\end{theorem}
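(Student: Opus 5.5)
The plan is to exploit that the hypothesis makes the two border polynomials differ only in their top monomial:
\[
B^v(x,y)-x^k y^{|v|_1}=B^w(x,y)-x^k y^{|w|_1},\qquad k=|v|=|w| .
\]
I would turn the generating-function recursion behind Fact \ref{fact:nerdcluster} into a recursive bijection. Let $F^w_n$ be the set of length-$n$ words whose only occurrence of $w$ is at the very end. I would use two standard decompositions.
\begin{enumerate}
\item[(i)] Appending a letter to a word of $\Omega^w_{n-1}$ gives $\Omega^w_{n-1}\times\{0,1\}=\Omega^w_n\sqcup F^w_n$.
\item[(ii)] For $a\in\Omega^w_{n-k}$, the first occurrence of $w$ in $aw$ ends at position $n-k+|b|$ for a unique border $b\in\mathcal{B}(w,w)$. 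Hence
\[
\Omega^w_{n-k}\cdot w=\bigsqcup_{b\in\mathcal{B}(w,w)} F^w_{n-k+|b|}\cdot w_{[|b|+1,k]} .
\]
\end{enumerate}
The same identities hold for $v$, indexed by the same proper borders plus the full word. The tails $w_{[|b|+1,k]}$ and $v_{[|b|+1,k]}$ carry $|w|_1-|b|_1$ and $|v|_1-|b|_1$ ones respectively. Since $b$ is a common prefix of $v$ and $w$, every matched piece therefore has weight difference exactly $|v|_1-|w|_1\ge 0$, uniformly in $b$.

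I would construct, by simultaneous induction on $n$, bijections $\kappa_n:\Omega^w_n\to\Omega^v_n$ and $\phi_n:F^w_n\to F^v_n$. The first should satisfy $|\kappa_n(\omega)|_1\le|\omega|_1$. The second should satisfy $|\phi_n(\omega)|_1\le|\omega|_1+(|v|_1-|w|_1)$, which is the natural invariant because elements of $F$ end in $w$ resp.\ $v$.

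First, (ii) together with $\kappa_{n-k}$ and $\phi_m$ for $m<n$ determines $\phi_n$. The $b=w$ piece is $F_n$ itself, and the remaining pieces involve strictly shorter $F$'s. This is a "difference of bijections" situation: the map on $\Omega^w_{n-k}\cdot w$ minus the maps on the lower pieces yields the map on $F_n$. Then (i), with $\kappa_{n-1}\times\mathrm{id}$ on the left and $\phi_n$ on the right, determines $\kappa_n$ as the difference of two bijections.

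Both steps are instances of the Garsia--Milne involution principle. Making it weight-monotone is the main obstacle. The involution principle chases elements back and forth between the two sides of the disjoint-union decomposition, and each pass alternately uses maps that shift weight by $+c$ and $-c$, where $c=|v|_1-|w|_1$. I would try to show that along any chase the net shift telescopes, so that the final image has weight at most the starting weight. Failing that, I would replace the involution principle by a weighted counting argument plus a greedy matching within each level $\Omega_{n,j}$. The corresponding inequalities on partial sums follow from the generating-function identity, since $B^v-B^w=x^k(y^{|v|_1}-y^{|w|_1})$ has a sign.

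For $\psi_{v,w}$ under the stronger cross-border hypothesis, I would define a local map: scan $\omega\in\Omega^w$ and replace every occurrence of $v$ by $w$, or more precisely swap maximal clusters of $v$-occurrences. The condition $\mathcal{B}(v,w)=\mathcal{B}(w,v)=$ the common proper borders guarantees three things. Overlaps between $v$-occurrences and $w$-occurrences look exactly like self-overlaps, so a cluster of overlapping $v$'s maps to a cluster of overlapping $w$'s of the same shape. The swap creates no new $v$. The inverse map, swapping $w$-clusters back to $v$-clusters, is well defined. Because the swap is determined by a window of size $O(k)$, this gives bounded coding radius. Each swap changes the weight by $|w|_1-|v|_1\le 0$, as required.

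Finally, a weight-nonincreasing bijection $\Omega^w_n\to\Omega^v_n$ gives $\rho^v_n\le\rho^w_n$ directly from \eqref{eq:def_rho}, and passing to the limit gives $\rho^v\le\rho^w$. For strictness when $|v|_1>|w|_1$, I would show that a positive fraction (uniformly in $n$) of $\omega\in\Omega^w_n$ have weight strictly decreased. For $\psi$, these are the words containing an occurrence of $v$, and such words have positive density by a transfer-matrix/Parry measure argument as in the $1001$ example. For $\kappa$, they are words whose recursive decomposition passes through (ii) with some $F$-piece. This shows the total number of ones drops by a constant times $n|\Omega^w_n|$, and hence $\rho^v<\rho^w$.
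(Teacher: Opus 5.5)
Your $\psi$ half is essentially the paper's argument. You swap each maximal $v$-cluster for the $w$-cluster of the same shape, and the cross-border hypothesis prevents new $v$'s or $w$'s at cluster boundaries. The genuine gap is in $\kappa$, which is all you have under the weaker hypothesis. The Garsia--Milne chase does not inherit weight monotonicity. Your induction bounds the piece maps $u\,w_{[|b|+1,k]}\mapsto\phi_m(u)\,v_{[|b|+1,k]}$ only from above. So in a chase $\Phi,\Pi^{-1},\Phi,\dots$ the inverse steps can add weight with no a priori bound, and nothing telescopes. ``I would try to show the net shift telescopes'' is exactly the unproved step, and your strictness claim for $\kappa$ depends on it. The paper sidesteps this by taking $\kappa$ to be the Carrigan--Hollars--Rowland map: starting from $\omega\in\Omega^w$, repeatedly replace the leftmost occurrence of $v$ by $w$. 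Bijectivity onto $\Omega^v$ under equal proper borders is their theorem, and monotonicity is immediate because every step trades $v$ for $w$.

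Your fallback can be completed, but not for the reason you give. Write $c=|v|_1-|w|_1$. Since $|\Omega^w_n|=|\Omega^v_n|$, a weight-nonincreasing bijection $\Omega^w_n\to\Omega^v_n$ exists for all $n$ iff $(\Omega^v-\Omega^w)/(1-y)$ has nonnegative coefficients. This does not follow from the sign of $B^v-B^w$, because the inversions in Fact \ref{fact:nerdcluster} destroy sign information. What does work is your own (i) and (ii). Let $F^u$ be the generating function of your sets $F^u_n$ and put $\widetilde B^u(x,y)=x^ky^{|u|_1}B^u(x^{-1},y^{-1})$. Then (i) and (ii) give $x^ky^{|u|_1}\Omega^u=F^u\widetilde B^u$. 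So $G^u:=\Omega^u/\widetilde B^u$ is the nonnegative generating function of the words $a$ with $au\in F^u$. Equal proper borders then yield
\[
\Omega^v-\Omega^w=(1-y^{c})\,x^ky^{|w|_1}\,G^vG^w ,
\]
which gives the partial-sum inequalities. Differentiating at $y=1$ gives $n|\Omega^w_n|(\rho^w_n-\rho^v_n)=c\,[x^{n-k}]\,G^v(x,1)G^w(x,1)$. This product has a double pole at $1/\lambda^w$, which gives strictness. The resulting sort-and-match bijection is still not explicit in the theorem's sense.

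Finally, your strictness argument for $\psi$ is a non sequitur. A positive fraction of words each losing at least one $1$ gives a total drop of order $|\Omega^w_n|$, not $n|\Omega^w_n|$. You need that typical words contain linearly many $v$'s (Corollary \ref{cor:parrypals}), each costing $c$.
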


The maps $\kappa$ were shown to be bijections in \cite[Theorem 3.1]{CarriganHollarsRowland2024NaturalBijection}, while the maps $\psi$ were shown to be topological conjugacies in \cite[Proposition 4.11]{ChandgotiaMarcusRicheyWu2026SinglePattern}. Both maps use the same basic idea -- namely, to replace copies of $v$ in a given $\omega \in \Omega^w$ with copies of $w$ -- but in different ways: in $\kappa$, occurrences of $v$ are replaced with $w$ iteratively, by searching left-to-right, while in $\psi$ clusters of $v$ (cf. Definition \ref{def:cluster}) are replaced all at once with clusters of $w$.

The border conditions appearing in Theorem \ref{thm:coffee} are easy to check and cover a large class of pairs. An example where the condition for $\psi$ holds is the pair $v=101001, w=110001$, and an example where the condition for $\kappa$ holds but the one for $\psi$ does not is $v = 100111, w = 111001$. Computer computations suggest that the condition for $\kappa$ holds for around $13\%$ of all pairs asymptotically, while the stronger condition for the existence of $\psi$ holds for around $5\%$ of pairs. (It can be proved that the stronger condition applies to a uniformly positive proportion of all pairs $v, w$ of arbitrarily large length by considering words which have no non-trivial borders or cross borders. One way to do this is to consider words that begin with $00$ and end with $11$, which guarantees that no borders or cross-borders of length $\leq 3$ occur; then the larger borders can be ruled out by a union bound.)


We believe that the maps $\varphi, \kappa$ and $\psi$ used to obtain these theorems could, with a bit more effort, be applied to obtain similar comparison results for other pattern densities in more general SFTs, including higher dimensional SFTs. See Section \ref{sec:further} for further discussion.

Finally, we note that for a large class of pairs, which do not necessarily satisfy the assumption of Theorem \ref{thm:coffee}, the monotonicity principle can be checked by an explicit generating function calculation. These are the words which have no non-trivial borders. 

\begin{proposition} \label{prop:selfless} Let $v, w$ be any words with $|v| = |w| \geq 3$, $\mathcal{B}(v,v) = \{v\}$, $\mathcal{B}(w,w) = \{w\}$, and $|v|_1 \leq |w|_1$. Then $\rho^w \leq \rho^v$ and $q^v \leq q^w$. The same holds if $\leq$ is replaced by $<$. \end{proposition}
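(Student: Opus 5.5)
Since $\mathcal{B}(w,w)=\{w\}$, the border polynomial is the single monomial $B^w(x,y)=x^k y^{m}$, where $k=|w|$ and $m=|w|_1$. The plan is to substitute this into the explicit formulas of Fact \ref{fact:nerdcluster} and write $\rho$ and $q$ as explicit functions of the pair $(k,m)$. With $k$ fixed, both will turn out to depend on $m$ in a manifestly monotone way, which gives the comparison between $v$ and $w$ directly. Concretely, Fact \ref{fact:nerdcluster} gives
\begin{equation*}
\Omega^w(x,y)=\bigl[D_m(x,y)\bigr]^{-1}, \qquad D_m(x,y)=1-x(1+y)+x^k y^m,
\end{equation*}
and
\begin{equation*}
\Phi^w(x,y)=\bigl[1+(1-\tfrac12 x(1+y))\,2^k x^{-k}y^{-m}\bigr]^{-1}.
\end{equation*}

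\textbf{The statistic $\rho$.} At $y=1$ the denominator is $D_m(x,1)=1-2x+x^k$, which does not depend on $m$. So $v$ and $w$ share the same dominant singularity $r=1/\lambda$, the smallest positive root of $1-2x+x^k$.
\begin{enumerate}
\item Check that $r$ is a simple root, that it is the unique singularity of minimal modulus, and that $\partial_x D_m(r,1)=-2+kr^{k-1}<0$. Simplicity and the sign follow because $1-2x+x^k$ decreases through its first positive zero. Uniqueness on the circle $|x|=r$ is the usual aperiodicity argument, using that $k\ge3$ and no nontrivial borders give an irreducible aperiodic shift.
\item Apply standard singularity perturbation, as in Example \ref{ex:golden}. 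For $y$ near $1$ the dominant root $r(y)$ of $D_m(\cdot,y)$ varies analytically, and
\begin{equation*}
\rho=-\frac{r'(1)}{r(1)}.
\end{equation*}
\item Compute $r'(1)=-\partial_yD_m/\partial_xD_m$ by implicit differentiation and simplify using $r^k=2r-1$. This gives
\begin{equation*}
\rho^w=\frac{1-m\,r^{k-1}}{2-k\,r^{k-1}}.
\end{equation*}
\end{enumerate}
The denominator $2-kr^{k-1}$ is positive by the first step. Hence $\rho$ is a strictly decreasing affine function of $m$, with $k$ and $r$ shared by $v$ and $w$. This yields $\rho^w\le\rho^v$, and strict inequality when $|v|_1<|w|_1$.

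\textbf{The statistic $q$.} As in Example \ref{ex:golden}, we have $q^w=\int_0^1 x^{-1}\partial_y\Phi^w(x,1)\,dx$.
\begin{enumerate}
\item Set $C(x)=2^kx^{-k}$. Differentiating the formula for $\Phi^w$ in $y$ and evaluating at $y=1$ gives
\begin{equation*}
\partial_y\Phi^w(x,1)=\Phi(x,1)^2\,C(x)\Bigl(\tfrac{x}{2}+m(1-x)\Bigr),\qquad \Phi(x,1)=\bigl[1+(1-x)C(x)\bigr]^{-1}.
\end{equation*}
\item Observe that $\Phi(x,1)$ and $C(x)$ are both independent of $m$.
\item The factor $\Phi^2C$ is positive on $(0,1]$, and $1-x>0$ on $(0,1)$. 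So the integrand is strictly increasing in $m$ pointwise on $(0,1)$, and integrating gives $q^v<q^w$ whenever $|v|_1<|w|_1$.
\item Check integrability near $x=0$: there $\Phi^2C\sim 1/C\sim 2^{-k}x^{k}$, so the integrand behaves like $x^{k-1}$ and everything is finite.
\end{enumerate}
If $|v|_1=|w|_1$, then $B^v=B^w$, and equality in both statistics follows from Corollary \ref{cor:lemonade}.

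\textbf{Main obstacle.} I expect the only delicate point to be justifying the passage from the generating function to the limit $\rho^w$ via the formula $-r'(1)/r(1)$. This requires the dominant pole to be simple and unique on its circle, uniformly for $y$ near $1$. I would handle it with the standard meromorphic/quasi-powers argument, or alternatively through the Parry-measure description in Appendix \ref{app:parryconv}. The rest is the short algebra sketched above.
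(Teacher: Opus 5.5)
Your proposal is correct and is essentially the paper's own proof: both substitute the monomial $B^w=x^ky^m$ into Fact~\ref{fact:nerdcluster} and obtain $\rho^w=\frac{1-mr^{k-1}}{2-kr^{k-1}}=\frac{\lambda^{k-1}-m}{2\lambda^{k-1}-k}$, and both write $q^w$ as an affine function of $m$ whose slope $\int_0^1 x^{-1}(1-x)\Phi(x,1)^2C(x)\,dx$ becomes exactly the paper's $\int_0^\infty \frac{t(t+2)^{k-2}}{(1+t(t+2)^{k-1})^2}\,dt$ under $x=2/(t+2)$ (your sign and integrability checks fill in details the paper leaves implicit). One small correction: borderless words include the words of $\mathcal{R}_k$ such as $1^{k-1}0$, whose shifts are not irreducible, so prove that $r$ is the only zero of $1-2x+x^k$ on $|x|=r$ directly rather than by appeal to irreducibility---if $2x=1+x^k$ with $|x|=r$, then $|1+x^k|=2r=1+r^k$ forces $x^k=r^k$, hence $x=r$.
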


\begin{proof} Consider words of length $k$ with no non-trivial borders, i.e. with border polynomial $B(x,y) = x^k y^j$ for some integer $0 \leq j \leq k$. For any word $w$ with $|w| = k$ and $|w|_1 = j$, $\lambda = \lambda^w$ does not depend on $j$ and satisfies $\lambda^{k} - 2\lambda^{k-1} + 1 = 0$. Standard singularity analysis using the generating function $\Omega^w(x,y)$ gives 
	\begin{equation} \rho^w = \frac{\lambda^{k-1} - j}{2\lambda^{k-1} - k}. \end{equation}
	
	Thus, for fixed $k$, $\rho^w$ is a strictly decreasing (linear) function of $j = |w|_1$. An exact calculation with the generating function $\Phi^w$ gives 
	\begin{equation} 
		q^w = \frac{1}{2} + \left(j - \frac{k}{2}\right) \int_{0}^\infty \frac{t(t+2)^{k-2}}{(1+t(t+2)^{k-1})^2} \, dt .
	\end{equation}
	
	The integral is easily seen to be positive and finite for $k \geq 3$, confirming that $q$ is a strictly increasing (again, linear) function of $j = |w|_1$. 
\end{proof}

The trivial border polynomials appearing here actually represent the most common case: around $26.78\%$ of all words have no non-trivial borders~\cite{GuibasOdlyzko1981Periods} (see also \url{https://oeis.org/A003000}, \url{https://oeis.org/A242430}). Many such pairs satisfy the assumption of Theorem \ref{thm:coffee}, giving us a bijective proof, but not all do: for example, any word $w$ with no non-trivial borders and its reversal will always have a cross-border of length $1$ (e.g. the pair $1100$ and $0011$, where $\mathcal{B}(1100, 0011) = \{0, 00\}$).



\subsection{Relating $\rho$ and $q$} 

We now turn to the letter frequency $q^w$ obtained from drawing i.i.d. letters until the word $w$ occurs, with the aim of showing that the $q^w$ values are ordered exactly opposite to the $\rho^w$ values. To justify this heuristic, we prove the following explicit expression relating $\rho$ and $q$: 

\begin{theorem} \label{thm:club} For any word $w$, 
	\begin{equation} q^w = \frac{1}{2}+\sum_{n=|w|}^{\infty}\frac{\mathbb{P}(\tau^w>n)}{n+1}\left(\frac{1}{2}-\rho_n^w\right), \end{equation}
	where $\p$ denotes i.i.d. Bernoulli$(1/2)$ measure. 
\end{theorem}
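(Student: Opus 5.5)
The plan is to write $Q(\tau^w)-\tfrac12$ as a telescoping sum of martingale-type increments up to the stopping time, and then take expectations term by term.

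\emph{Setup.} Put $D_n = N(n) - n/2$ and $Y_n = D_n/n$ for $n\ge 1$, with $Y_0 = 0$. Then $Q(\tau^w) - \tfrac12 = Y_{\tau^w}$. Write $\mathcal{F}_n = \sigma(X_1,\dots,X_n)$. Since $\tau^w \geq 1$ almost surely, we have the telescoping identity
\begin{equation*}
Y_{\tau^w} = \sum_{n\geq 0} (Y_{n+1}-Y_n)\,\mathbf{1}\{\tau^w > n\}.
\end{equation*}

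\emph{Increment identity.} A direct computation gives, for $n \geq 1$,
\begin{equation*}
Y_{n+1} - Y_n = \frac{X_{n+1}-\tfrac12}{n+1} - \frac{D_n}{n(n+1)},
\end{equation*}
and $Y_1 - Y_0 = X_1 - \tfrac12$.

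\emph{Taking expectations.} The event $\{\tau^w>n\}$ is $\mathcal{F}_n$-measurable and $X_{n+1}$ is independent of $\mathcal{F}_n$ with mean $\tfrac12$. Hence every term $\E[(X_{n+1}-\tfrac12)\mathbf{1}\{\tau^w>n\}]$ vanishes; this includes the $n=0$ term, since $\{\tau^w>0\}$ has probability one.

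For the remaining term, condition on $\{\tau^w>n\}$. Under this conditioning, $(X_1,\dots,X_n)$ is uniform on $\Omega_n^w$, because $\p(\tau^w>n)=2^{-n}|\Omega_n^w|$ and every avoiding word has probability $2^{-n}$. By Definition \ref{def:rhos},
\begin{equation*}
\E[D_n \mathbf{1}\{\tau^w>n\}] = \p(\tau^w>n)\,\bigl(n\rho_n^w - \tfrac{n}{2}\bigr).
\end{equation*}
The $n$th summand is therefore $\frac{\p(\tau^w>n)}{n+1}\bigl(\tfrac12-\rho_n^w\bigr)$. For $n<|w|$ we have $\Omega_n^w=\{0,1\}^n$, so $\rho_n^w=\tfrac12$ and these terms vanish. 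This leaves exactly the sum from $n=|w|$ in the statement.

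\emph{Main obstacle.} The only delicate point is justifying the interchange of expectation and the infinite sum. I would use Fubini/dominated convergence. Since $|D_n|\le n/2$, we get $|Y_{n+1}-Y_n|\le \frac{1}{n+1}$ for $n\ge1$ (and $|Y_1-Y_0|\le\tfrac12$). Therefore
\begin{equation*}
\sum_{n\geq0} |Y_{n+1}-Y_n|\,\mathbf{1}\{\tau^w>n\} \le \tfrac12 + \sum_{n=1}^{\tau^w-1}\frac{1}{n+1} \le 1+\log \tau^w .
\end{equation*}
This is integrable because $\p(\tau^w>n)\asymp(\lambda^w/2)^n$ decays exponentially (or simply because $\E\tau^w<\infty$ by Fact \ref{fact:candy}). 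This justifies both splitting the increment into its two pieces and exchanging the sum with $\E$, which completes the argument.
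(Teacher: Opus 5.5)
Your proof is correct, and it reaches the identity by a different arrangement of the computation than the paper uses.

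\textbf{The paper's route.} It fixes a horizon $n$ and splits $\{0,1\}^n$ according to whether and when $w$ first appears. Using that the letters after $\tau^w$ are uniform, it obtains a fixed-$n$ identity. After clearing the factor $\frac1n$, this identity is the optional-stopping statement $\E[D_{\tau^w\wedge n}]=0$, i.e.
\begin{equation*}
\E[D_n\mathbf{1}\{\tau^w>n\}]+\E[D_{\tau^w}\mathbf{1}\{\tau^w\le n\}]=0 .
\end{equation*}
The paper then weights this identity by $\frac{1}{n(n+1)}$ and sums over $n\le N$. The telescoping sum $\sum_{n=k}^{N}\frac{k}{n(n+1)}=1-\frac{k}{N+1}$ converts $D_{\tau^w}$ into $D_{\tau^w}/\tau^w$. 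The proof ends with a limit $N\to\infty$, whose error term requires $\E\tau^w<\infty$.

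\textbf{Your route.} You do the summation by parts pathwise on $Y_n=D_n/n$ instead. The martingale increment drops out by independence. The weight $\frac{1}{n+1}$ then appears automatically from $\frac1n-\frac1{n+1}$ rather than as a chosen multiplier. There is no auxiliary horizon, and the only analytic input is one dominated-convergence step with the bound $1+\log\tau^w$.

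\textbf{Comparison.} The paper's version stays closer to finite counting over words, in keeping with its combinatorial viewpoint. Yours is shorter and avoids the $N\to\infty$ bookkeeping. It also makes clear that the same argument gives
\begin{equation*}
\E\bigl[Q(\tau)\bigr]-\tfrac12=\sum_{n\ge1}\frac{\E\bigl[-D_n\mathbf{1}\{\tau>n\}\bigr]}{n(n+1)}
\end{equation*}
for any stopping time with $\E\log\tau<\infty$. The conditional uniformity on $\Omega_n^w$ is used only at the very end, to rewrite each summand in terms of $\rho_n^w$.
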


Theorem \ref{thm:club} quickly gives the following two corollaries relating the $\rho$ and $q$ orderings.  

\begin{corollary} \label{cor:kave} If $\rho_n^w \leq \frac{1}{2}$ for all $n$ and $\rho^w < \frac{1}{2}$, then $q^w > \frac{1}{2}$. \end{corollary}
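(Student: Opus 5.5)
We will derive the corollary directly from the identity in Theorem \ref{thm:club}, which we take as given:
\begin{equation*} q^w - \frac{1}{2} = \sum_{n=|w|}^{\infty}\frac{\mathbb{P}(\tau^w>n)}{n+1}\left(\frac{1}{2}-\rho_n^w\right). \end{equation*}

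The hypothesis $\rho_n^w \le \frac12$ for all $n$ makes every summand nonnegative, since $\p(\tau^w>n)\ge 0$ and $n+1>0$. Hence $q^w \ge \frac12$ at once. Because all terms are nonnegative, the series converges (possibly to $+\infty$ a priori; but $q^w\le 1$, so it is finite) and may be bounded below by any single term. It therefore suffices to produce one index $n \ge |w|$ with
\begin{equation*} \p(\tau^w>n) > 0 \quad\text{and}\quad \rho_n^w < \tfrac12. \end{equation*}

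The first condition is equivalent to $\Omega_n^w \neq \emptyset$, because $\p(\tau^w>n) = 2^{-n}|\Omega_n^w|$. This holds for every $n$: since $|w|\ge 1$ and $w$ is a single word, at least one of the constant words $0^n$, $1^n$ avoids $w$. Indeed, if $w$ contains both letters then both constant words avoid it, and if $w$ is constant then the opposite constant word avoids it. So $\p(\tau^w>n)>0$ for all $n$.

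For the second condition, we use $\rho^w = \lim_n \rho_n^w < \frac12$. Setting $\delta = \frac12 - \rho^w > 0$, there is an $N$ such that $\rho_n^w < \frac12 - \delta/2$ for all $n \ge N$. Taking $n = \max(N,|w|)$ gives a strictly positive term, so $q^w > \frac12$.

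The only step requiring any care is this positivity of a single term, namely checking that the weight $\p(\tau^w>n)$ does not vanish. That is handled by the constant-word observation above, so no step should present a real obstacle.
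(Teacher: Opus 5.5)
Your proposal is correct and follows the paper's proof. In both, the hypothesis $\rho_n^w \le \frac12$ makes every term of the series in Theorem \ref{thm:club} nonnegative, and $\rho^w < \frac12$ forces the tail terms to be strictly positive. Your check that $\p(\tau^w>n)=2^{-n}|\Omega_n^w|>0$, using that some constant word always avoids $w$, fills in a detail the paper leaves implicit.
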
 

\begin{proof} The inequality $\rho_n^w \leq \frac{1}{2}$ guarantees that all terms in the sum in Theorem \ref{thm:club} are non-negative, and the strict inequality $\rho^w < \frac{1}{2}$ guarantees that all terms are eventually strictly positive. \end{proof}

\begin{corollary} \label{cor:haz} Let $v, w$ be such that $\lambda^v \leq \lambda^w$ and $\rho_n^w \leq \rho_n^v \leq \frac{1}{2}$ for all $n$. Then $q^v \leq q^w$. 
\end{corollary}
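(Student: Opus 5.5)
The plan is to compare the two series given by Theorem \ref{thm:club} term by term. First, put both series on a common index range. For $n < |w|$ every word of length $n$ avoids $w$, so $\Omega_n^w = \{0,1\}^n$. By the symmetry $0 \leftrightarrow 1$ of the full space this gives $\rho_n^w = \frac12$, so the summand $\frac{\p(\tau^w>n)}{n+1}\left(\frac12-\rho_n^w\right)$ vanishes for $1 \le n < |w|$. The same holds for $v$. Hence I will rewrite Theorem \ref{thm:club} for both words as
\begin{equation*}
q^w - \frac12 = \sum_{n\ge 1} \frac{\p(\tau^w>n)}{n+1}\left(\frac12-\rho_n^w\right), \qquad q^v - \frac12 = \sum_{n\ge 1} \frac{\p(\tau^v>n)}{n+1}\left(\frac12-\rho_n^v\right).
\end{equation*}

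Next, compare the tail probabilities. By Theorem \ref{thm:juicy}, the hypothesis $\lambda^v \le \lambda^w$ gives $|\Omega_n^v| \le |\Omega_n^w|$ for all $n \ge |v|\wedge|w|$. For $n < |v| \wedge |w|$ both sets equal $\{0,1\}^n$, so the inequality holds for every $n \ge 1$. Since $\p(\tau^u>n) = 2^{-n}|\Omega_n^u|$ for any word $u$, this yields $\p(\tau^v>n) \le \p(\tau^w>n)$ for all $n$.

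Now make the termwise comparison. The hypothesis $\rho_n^v \le \frac12$ makes $\frac12 - \rho_n^v \ge 0$, so
\begin{equation*}
\p(\tau^v>n)\left(\tfrac12-\rho_n^v\right) \le \p(\tau^w>n)\left(\tfrac12-\rho_n^v\right) \le \p(\tau^w>n)\left(\tfrac12-\rho_n^w\right).
\end{equation*}
The second inequality uses $\rho_n^w \le \rho_n^v$ together with the nonnegativity of $\p(\tau^w>n)$. Dividing by $n+1$ and summing over $n \ge 1$ gives $q^v - \frac12 \le q^w - \frac12$. All terms of both series are nonnegative and both series converge by Theorem \ref{thm:club}, so the termwise summation is legitimate.

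The only delicate point is the bookkeeping at small $n$. The sums in Theorem \ref{thm:club} start at different indices $|v|$ and $|w|$, and Theorem \ref{thm:juicy} is stated only for $n \ge |w|\wedge|v|$. Both issues are handled by the observation that below the relevant length the space $\Omega_n$ is the full cube, which forces $\rho_n = \frac12$ and makes the counts agree.
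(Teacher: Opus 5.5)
Your argument is correct and is the paper's proof: you compare the two series of Theorem~\ref{thm:club} term by term, using Theorem~\ref{thm:juicy} together with $\p(\tau^u>n)=2^{-n}|\Omega_n^u|$ to get $\p(\tau^v>n)\le\p(\tau^w>n)$, and your small-$n$ bookkeeping (where $\Omega_n$ is the full cube, so $\rho_n=\frac12$ and the counts agree) makes explicit a detail the paper passes over. One caution applies equally to the paper's argument: the step $\lambda^v\le\lambda^w\Rightarrow|\Omega_n^v|\le|\Omega_n^w|$ for all $n$ can fail when $|v|\neq|w|$, since for example $1110$ and $111$ have the same entropy but $|\Omega_3^{1110}|=8>7=|\Omega_3^{111}|$, so this route only justifies the corollary when $|v|=|w|$.
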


\begin{proof} By Theorem \ref{thm:juicy}, $\lambda^v \leq \lambda^w$ is equivalent to $|\Omega^v_n| \leq |\Omega^w_n|$ for all $n$. Since $\p(\tau^w > n) = 2^{-n}|\Omega_n^w|$, the result follows by plugging into the sum in Theorem \ref{thm:club}. \end{proof}

Corollary \ref{cor:kave} says that $\rho$ and $q$ give the opposite ordering when comparing the frequency to the unbiased value $\frac{1}{2}$, while Corollary \ref{cor:haz} relates the two orderings more generally, under an assumption on the entropy. Comparing the entropies is easy by Theorem \ref{thm:juicy}, but comparing $\rho_n^v$ to $\rho_n^w$ boils down to proving an inequality between two linear recursions, a notoriously difficult task. Computer simulations suggest that the assumption $\rho_n^v \leq \rho_n^w$ is weaker than the assumption $\rho^v \leq \rho^w$ (excluding the reducible words $\mathcal{R}$), so it seems likely the first assumption in Corollary \ref{cor:kave} is not necessary. See Section \ref{sec:further} for further discussion.

\section{Proofs} \label{sec:proofs}

Here we prove the main results of Section \ref{sec:freaksult}. 

\subsection{Cluster maps} \label{sec:clusters}

In this section we construct the maps $\varphi$ and $\psi$ appearing in the theorems of Section \ref{sec:freaksult}. To start, we introduce the notion of a cluster, which will be central to both constructions. 

\begin{definition} \label{def:cluster} Fix a word $v \in \{0,1\}^k$. Call a word $c$ a {\bf{cluster}} of $v$ if every digit of $c$ is part of a factor of $v$ in $c$, that is, if $|c| = n$, and for all $i \in [n], c_{[i-j+1, i-j+k]} = v$ with some $1\leq j\leq k$. Denote the set of all clusters of $v$ by $\mathcal{C}^v$. \end{definition}

The clusters are all possible ways of packing copies of $w$ together with overlaps. Note that for any fixed word $v$, any word $\omega$ can be uniquely decomposed into $v$ clusters and $v$ non-clusters: 

\begin{fact} \label{fact:clustercomp} For any words $v$ and $\omega$, there is a unique decomposition of $\omega$ into clusters of $v$: namely, there exist words $a_1, a_2, \ldots, a_{r+1}$ and $c_1, c_2, \ldots, c_{r+1} \in C^v$ such that 
\begin{equation} \label{eq:decomp} \omega = a_1 c_1 a_2 c_2 \cdots a_r c_r a_{r+1}, \end{equation}
and additionally: for all $i$, $v_{[2, k]}  a_i  v_{[1,k-1]} \in \Omega^v$; and none of the $a_i$ are the empty word except possibly $a_1$ and/or $a_{r+1}$. 
\end{fact}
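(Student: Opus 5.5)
The natural approach is to build the decomposition greedily from the set of occurrences of $v$ in $\omega$ and then check uniqueness directly. Let $k=|v|$ and $I=\{i : \omega_{[i,i+k-1]} = v\}$ be the starting positions of occurrences of $v$ in $\omega$. To each $i\in I$ attach the interval $J_i=[i,i+k-1]$, and let $U=\bigcup_{i\in I} J_i$ be the set of \emph{covered} positions.

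\textbf{Existence.} Decompose $U$ into its maximal runs of consecutive integers $R_1<R_2<\cdots<R_r$. Take $c_\ell=\omega_{R_\ell}$, and let the $a_\ell$ be the maximal uncovered stretches between them: $a_1$ is the part before $R_1$, $a_{\ell}$ lies between $R_{\ell-1}$ and $R_\ell$, and $a_{r+1}$ is the part after $R_r$.

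1. Each $c_\ell$ is a cluster. Every position of $R_\ell$ lies in some $J_i$. That $J_i$ is a consecutive set meeting $R_\ell$ and contained in $U$, so by maximality $J_i\subseteq R_\ell$. Hence each digit of $c_\ell$ belongs to an occurrence of $v$ lying entirely inside $c_\ell$, which is exactly Definition~\ref{def:cluster}.
2. The interior $a_\ell$ ($2\le \ell\le r$) are nonempty. Consecutive maximal runs are separated by at least one uncovered position.
3. The condition $v_{[2,k]}a_\ell v_{[1,k-1]}\in\Omega^v$. I would read this condition as saying that no occurrence of $v$ in $\omega$ meets $a_\ell$. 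That holds by construction, since every occurrence lies in $U$ and $a_\ell\cap U=\emptyset$.

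The main obstacle is step 3, namely checking the paper's exact formulation. The padded word $v_{[2,k]}a_\ell v_{[1,k-1]}$ is not literally a subword of $\omega$, because the actual neighbours of $a_\ell$ are the tails of $c_{\ell-1}$ and $c_\ell$, not $v$ itself. The paper presumably intends the padding as a proxy for the adjacent cluster ends, so that a forbidden occurrence in the padded word corresponds to an occurrence of $v$ in $\omega$ overlapping $a_\ell$. This correspondence holds when the relevant neighbouring letters in $\omega$ coincide with $v_{[2,k]}$ and $v_{[1,k-1]}$. Indeed, $c_{\ell-1}$ ends with a full copy of $v$, and $c_\ell$ begins with one.

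So I would prove the statement in the form ``no occurrence of $v$ in $a_{\ell-1}$'s tail of $c_{\ell-1}$, followed by $a_\ell$, followed by the head of $c_\ell$, intersects $a_\ell$.'' This follows because any occurrence straddling $a_\ell$ would cover a position of $a_\ell$. For the end pieces $a_1$ and $a_{r+1}$ the same argument applies with only one side padded.

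\textbf{Uniqueness.} Suppose a second decomposition with the listed properties is given. Each of its clusters is a union of occurrences of $v$, so it lies in $U$. Each of its $a_i$ meets no occurrence of $v$, by the condition above, so it lies in $U^c$. Therefore the cluster pieces are exactly the covered positions and the $a_i$ are exactly the uncovered ones.

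Nonemptiness of the interior $a_i$ forces each cluster piece to be a maximal run of $U$. Two adjacent clusters with empty $a$ between them would merge into one run, and the maximal-run decomposition of $U$ is unique. Hence the decomposition coincides with the one constructed above.
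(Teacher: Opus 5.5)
The paper states this Fact without proof, so there is no argument of theirs to compare against. Your construction via the maximal runs of the covered set $U$, with uniqueness from ``clusters lie in $U$, gaps lie in $U^c$, interior gaps are nonempty,'' is the natural proof. It is correct once step 3 is pinned down, and two corrections are needed there.

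\textbf{Interior pieces.} For an interior piece $a_\ell$ ($2\le \ell\le r$), the padded word \emph{is} literally a factor of $\omega$, contrary to what you wrote.
\begin{itemize}
\item By the definition of a cluster, the first (last) digit of any cluster can only be covered by an occurrence of $v$ starting (ending) there.
\item So every cluster begins and ends with $v$ and has length at least $k$.
\item Hence the $k-1$ letters of $\omega$ immediately before and after $a_\ell$ are exactly $v_{[2,k]}$ and $v_{[1,k-1]}$.
\item Since both pads have length $k-1$, every occurrence of $v$ in $v_{[2,k]}a_\ell v_{[1,k-1]}$ meets $a_\ell$.
\item Conversely, every occurrence of $v$ in $\omega$ meeting $a_\ell$ lies inside this window.
\end{itemize}
So the condition is \emph{exactly} equivalent to ``no occurrence of $v$ in $\omega$ meets $a_\ell$.'' State and prove this as a lemma rather than calling the padding a ``proxy.'' You need it in both directions: in existence, and in uniqueness for the competing decomposition. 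The uniqueness case is where you use that \emph{arbitrary} clusters, not just your $c_\ell$, start and end with $v$.

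\textbf{End pieces.} For $a_1$ and $a_{r+1}$ the condition as written is genuinely false. So ``the same argument applies with only one side padded'' does not prove the stated claim; it changes it.
\begin{itemize}
\item Take $v=11$ and $\omega=1$. There is no occurrence of $v$, which forces $r=0$ and $a_1=\omega$. Yet $v_{[2,2]}\,a_1\,v_{[1,1]}=111\notin\Omega^{11}$.
\item Likewise $v=1001$ and $\omega=001$ give $001\,001\,100$, which contains $1001$.
\item An empty end piece gives $v_{[2,k]}v_{[1,k-1]}$, which contains $v$ whenever $v$ is a proper power. For example, $v=11$ and $\omega=11$ admit no valid decomposition.
\end{itemize}
The correct formulation imposes the two-sided padding condition only for $2\le i\le r$, with end pieces padded only on the side facing a cluster. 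Equivalently, it requires that no occurrence of $v$ in $\omega$ meets any $a_i$. This is precisely how the Fact is used in the proof of Theorem~\ref{thm:coffee}. You should say this explicitly instead of hedging, and also note the index typo $c_{r+1}$, which should be $c_r$. With that formulation, your existence and uniqueness arguments (steps 1 and 2 and the maximal-run uniqueness) are complete.
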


\subsection{Proof of Theorem \ref{thm:coffee}}

\begin{proof}[Proof of Theorem \ref{thm:coffee}] \cite[Theorem 3.1]{CarriganHollarsRowland2024NaturalBijection} gives the desired bijection: namely, the map $\phi_L$, with words $p = v$ and $q = w$, which repeatedly replaces the left-most $v$ factor in $\omega \in \Omega^w$ with $w$, is the desired bijection $\kappa_{v,w}$. Since each replacement of $v$ by $w$ weakly decreases the number of $1$s, the same property holds for $\kappa$. Thus $\rho^v \leq \rho^w$. We defer the strict inequality case until the end, where we will deal with it simultaneously for both $\kappa$ and $\psi$. 
	
We now turn to the construction of the map $\psi_{v,w}$. We start by showing that the assumption $\mathcal{B}(v,v) \setminus \{v\} = \mathcal{B}(w,w) \setminus \{w\}$ alone implies that the clusters $\mathcal{C}^v$ and $\mathcal{C}^w$ are in (length-preserving) bijection. Define a map $g_{v,w}: C^v \to C^w$ as follows: for $c \in C^v$ and any index $j \in [|c|]$, $c_j$ belongs to at least one factor of $v$ in $c$. Pick any such factor, and write $c_j = v_i$, so $c_j$ is the $i$th digit in that factor of $w$, for some $i \in [|w|]$. Then we define the image map at location $j$ by $g_{v,w}(c)_j = w_i$. The map does not depend on which factor of $v$ was chosen since they all agree at location $j$ in $c$, so $g$ is well-defined. Reversing the roles of $v$ and $w$ shows that $g$ is an involution, and hence a bijection. 

We now upgrade the map $g_{v,w}$ to an involution $\psi_{v,w}: \Omega^w \to \Omega^v$ using Fact \ref{fact:clustercomp}. Given $\omega \in \Omega^w$ with unique $v$ cluster decomposition as in Equation \eqref{eq:decomp}, define
\begin{equation} \label{eq:imagesum} \psi_{v,w}(\omega) = a_1 g_{v,w}(c_1) a_2 \cdots g_{v,w}(c_r) a_{r+1}. \end{equation}
Since the decomposition is unique, the map is well-defined, but it remains to check that the above expression is the unique $w$ cluster decomposition, and that the image $\psi_{v,w}(\omega)$ belongs to $\Omega^v$. Note that since $g$ preserves the cluster length, $\psi$ preserves word length. 

Fix $\omega \in \Omega^w$ with $v$ cluster decomposition as in Fact \ref{fact:clustercomp}. First observe that by the definition of the cluster decomposition, no digit of any non-cluster factor $a_i$ may belong to a $v$ factor in $\omega$. The same holds for $w$ factors since $\omega \in \Omega^w$. 

Suppose for the sake of contradiction that some digit of a non-cluster part $a_i$ belongs to a $w$ factor in $\psi_{v,w}(\omega)$. Since $\omega \in \Omega^w$, no $a_i$ contains any $w$ factor, so assume that some digit of $a_i$ belongs to a $w$ factor in the word $a_i  g_{v,w}(c_i)$. (The other possibility is that some digit of $a_i$ belongs to a $w$ factor in the word $g_{v,w}(c_{i-1}) a_i$, but the same proof works for both cases.) It follows that $\mathcal{B}(a, w)$ contains a prefix $p$ of $w$ such that $p b = w$ for some $b \in \mathcal{B}(w,w)$. By the assumption $\mathcal{B}(w,v) = \mathcal{B}(w,w) \setminus \{w\}$, $c_i$ has the word $b$ as a prefix, and thus $a_i b$ is a factor of $\omega$, a contradiction since the latter contains $w$ as a factor. 

Similarly, suppose for the sake of contradiction that some non-cluster part $a_i$ belongs to a $w$ factor in $\psi_{v,w}(\omega)$ which also shares digits in common with the neighboring clusters $g_{v,w}(c_{i-1})$ and $g_{v,w}(c_i)$. Write that $w$ factor as $w = b  a_i  b'$ for some borders $b, b' \in \mathcal{B}(w,w) \setminus \{w\}$. Then since $\mathcal{B}(w,w) \setminus \{w\} = \mathcal{B}(v,w) = \mathcal{B}(w,v)$, $b$ is also a suffix of $v$, and similarly $b'$ is a prefix of $v$. By definition of the map $g$, $c_{i-1}$ has $b$ as a suffix, and $c_i$ has $b'$ as a prefix. Thus $b  a_i  b' = w$ is a factor of $\omega$ which connects the two clusters $c_{i-1}$ and $c_i$, contradicting the assumption that $\omega \in \Omega^w$. Combining the above, we have shown that the formula \eqref{eq:imagesum} is the unique $w$ cluster decomposition of the image word $\psi_{v,w}(\omega)$. 

It remains to check that the image avoids $v$ as a factor. We noted previously that no $a_i$ may contain $v$ as a factor. Suppose for the sake of contradiction that one of the cluster parts $g_{v,w}(c_i)$ contains $v$ as a factor. Then since $|v| = |w|$ and $g_{v,w}(c_i) \in \mathcal{C}^w$, that $v$ factor is covered by two overlapping $w$ factors in $g_{v,w}(c_i)$: that is, there exist borders $b, b' \in \mathcal{B}(w,v) = \mathcal{B}(v,w)$ such that $v = bs = tb'$ and $|b| + |b'| > |v|$. Since the cross border sets are equal to $\mathcal{B}(w,w) \setminus \{w\} = \mathcal{B}(v,v) \setminus \{v\}$, $b$ is simultaneously a prefix of both $v$ and $w$, and $b'$ is simultaneously a suffix of both $v$ and $w$. Since $|b| + |b'| > |v|$, we reach the contradiction $v = w$. 

Similarly, suppose for the sake of contradiction that some non-cluster part $a_i$ belongs to a $v$ factor in $\psi_{v,w}(\omega)$ which also shares digits in common with the neighboring clusters $g_{v,w}(c_{i-1})$ and $g_{v,w}(c_i)$. Write that $v$ factor as $v = b  a_i  b'$ for some borders $b \in \mathcal{B}(w, v), b' \in \mathcal{B}(v, w)$. Then since $\mathcal{B}(w,v) = \mathcal{B}(v,v) \setminus \{v\}$, $b$ is also a suffix of $v$, and similarly $b'$ is a prefix of $v$. By definition of the map $g$, $c_{i-1}$ has $b$ as a suffix, and $c_i$ has $b'$ as a prefix. Thus $b  a_i  b' = v$ is a factor of $\omega$ which connects the two clusters $c_{i-1}$ and $c_i$, contradicting the assumption that we started with the unique cluster decomposition. Combining the above, we have shown that each factor $g_{v,w}(c_i)$ is a maximal cluster, and hence the formula \eqref{eq:imagesum} is the unique $w$ cluster decomposition of the image word. This completes the proof that $\psi_{v,w}$ is the desired involution.

Finally, note that for any cluster $c \in C^v$, any digit of $c$ that belongs to multiple $v$ factors is left unchanged by the map $g_{v,w}$, since any such digit belongs to a border of $v$ which exactly agrees with the corresponding border of $w$. Thus the only digits of $c$ which are possibly changed by $g$ are those at the edges of the cluster $c$: namely, the map $g_{v,w}$ transforms the border complements $v_{[1,k-j]}$ or $v_{[k-j+1, k]}$ when $|v| = k$ and $v_{[1,j]} \in \mathcal{B}(v,v)$ to the corresponding border complements for $w$, i.e. $w_{[1, k-j]}$ or $w_{[k-j+1, k]}$. For any such border $b = v_{[1, j]}$ with corresponding border complements $s = v_{[j+1, k]}$ and $t = w_{[j+1, k]}$, 
\begin{equation} |v|_1 \geq |w|_1 \implies |s|_1 = |v|_1 - |b|_1 \geq |w|_1 - |b|_1 = |t|_1. \end{equation}
It follows that for any $\omega \in \Omega^w$, $|\omega|_1 \geq |\psi_{v,w}(\omega)|_1$. 

Finally, suppose $|v|_1 > |w|_1$. Then every occurrence of the cluster $c_i = v$ in $\omega$ is transformed to $g_{v,w}(v) = w$, decreasing the number of $1$'s by at least one, and the same holds for the successive replacement map $\kappa$. By Corollary \ref{cor:parrypals}, a uniformly random element of $\omega \in \Omega^w_n$ has $\Theta(n)$ many such clusters with high probability, implying $\rho^v < \rho^w$. 
\end{proof}

\subsection{Proof of Theorem \ref{thm:redbeer}}

To construct the maps $\varphi_w$, we will also build basic maps related to clusters which can then be extended using the decomposition of Fact \ref{fact:clustercomp}. There is an essential difference to keep in mind: while the maps $\psi_{v,w}$ mapped clusters of $v$ to clusters of $w$, the maps $\varphi_w$ will map near-clusters of $w$ to near-clusters of $w$, where `near-cluster' means a cluster with a single digit bitflipped. To start, we will need a more restrictive notion of cluster, which zooms in on the central overlap region. 

\begin{definition} \label{def:center} Fix a word $w$, and let $c \in \mathcal{C}^w$ be a cluster. Define $I(c)$, the {\bf{common intersection}} of $c$, as the factor of $c$ consisting of all digits which belong to every factor of $w$ in $c$. We call $c$ a {\bf{compact cluster}} if $I(c) \neq \emptyset$. Denote the set of compact clusters by $\mathfrak{C}^w \subset \mathcal{C}^w$. 
\end{definition}

By observing the first and last digits of a cluster, we see that any cluster starts and ends with a factor of $w$, thus 
a cluster being compact simply means that these two overlap, implying that any $c\in \mathfrak{C}^w$ has length at most $2|w|-1$. 
This overlapping part is an element of $\mathcal{B}(w, w)$ 
and it coincides with the common intersection of $c$. Thus there is a natural bijection between $\mathfrak{C}^w$ and $\mathcal{B}(w, w)$.
Consequently, $1\leq |\mathfrak{C}^w|\leq |w|$. ($w \in \mathfrak{C}^w$ always holds.) In contrast, when $\mathcal{B}(w,w) \neq \{w\}$, $\mathcal{C}^w$ is always infinite, containing all words consisting of consecutive overlapping copies of $w$. Observe in particular the equivalence 
\begin{equation} \forall b \in \mathcal{B}(w,w),  2|b|_1 \geq |b|  \iff \forall c \in \mathfrak{C}^w, 2|I(c)|_1 \geq |I(c)|. \end{equation}
Note this also holds if $\geq$ is replaced by $=$, which is exactly the balanced borders condition. 

Recall the sets of marked words $\Omega^w(\epsilon)$. The following notation will be useful: 
\begin{definition} For any marked sequence $\alpha = (\omega, j)$ with $j \in [|\omega|]$ and $\epsilon \in \{0,1\}$, denote by $\alpha^\epsilon$ the marked word obtained by setting the marked letter in $\alpha$ to be $\epsilon$, i.e. $\alpha^{\epsilon} = (\omega', j)$ where 
	\begin{equation} \omega'_j = \begin{cases} \epsilon, & i = j \\ \omega'_i = \omega_i, & i \neq j \end{cases}\end{equation} 
\end{definition}

Next we define a class of marked words which are one digit away from being compact clusters. These words will form the basis of the maps $\varphi_w$. 

\begin{definition} Fix a word w. For $\epsilon \in \{0,1\}$, denote by $\mathfrak{C}^w(\epsilon)$ the set of marked words $(c^{\epsilon},j) \in \Omega^w(\epsilon)$ obtained by marking a compact cluster $c \in \mathfrak{C}^w$ at some digit, necessarily with value $1-\epsilon$, at index $j \in [|I(c)|]$.   \end{definition}

We are now ready to construct the map $\varphi_w$ and prove Theorem \ref{thm:redbeer}. One should keep in mind that the injection constructed in Section \ref{sec:freaksult} for the word $w = 1001$ is exactly the map $\varphi_{1001}$: in this case the compact clusters are $\mathfrak{C}^{1001} = \{1001, 1001001\}$, and the marked compact near-clusters are 
\begin{equation}\mathfrak{C}^{1001}(1) = \{1\widehat{1}01, 10\widehat{1}1\} \hspace{.25cm} \text{ and } \hspace{.25cm} \mathfrak{C}^{1001}(0) = \{\widehat{0}001, 100\widehat{0}, 100\widehat{0}001\}. \end{equation}

\begin{proof}[Proof of Theorem \ref{thm:redbeer}] We start by constructing an injective map $h_w: \mathfrak{C}^w(1) \to \mathfrak{C}^w(0)$. By the discussion following Definition \ref{def:center}, and since $2|b|_1 \geq |b|$ for all borders $b$ of $w$, for each $c \in \mathfrak{C}^w$ there exists a matching of the $1$s and $0$s in $I(c)$ such that every $0$ is matched with a unique $1$. Let $M_c$ denote such a matching. For any compact cluster $c \in \mathfrak{C}^w$ and index $j \in I(c)$ with $c_j = 0$, we define $h_w((c, j)^1) = (c, M_c(j))^0$, where $M_c(j)$ denotes the index of the $0$ matched to the $1$ at position $j \in I(c)$. In other words, we move the mark from the originally marked $1$ to its matching $0$, and bitflip the values of both positions. It follows immediately that the map $h_w$ is a well-defined injection. 

We now use the map $h$ to construct $\varphi_w$. Fix $\omega \in \Omega^w(1)$. 

\begin{enumerate}
	
	\item Suppose the marked $1$ in $\omega$ belongs to a marked factor of $\omega$ which is an element of $\mathfrak{C}^w(1)$. In this case, we define $\varphi_w(\omega) = \omega^0 \in \Omega^w(0)$. 

	\item Suppose the marked $1$ does not belong to a marked factor of $\omega$ which is an element of $\mathfrak{C}^w(1)$. Let $c$ denote the (contiguous) subword of $\omega^0$ consisting of all digits which belong to both the leftmost copy of $w$ containing the marked index in $\omega^0$ and to the rightmost such copy of $w$ -- these copies of $w$ exist by the definition of $\mathfrak{C}^w(1)$ (they may be equal). Since all factors $w$ in $\omega^0$ contained in $c$ overlap at the marked position, $c$ is a compact cluster, and since the mark has value $1$ in $\omega$, the corresponding digits of $\omega$ form the factor $c^1 \in \mathfrak{C}^w(1)$. Thus we can define $\varphi_w(\omega)$ by replacing the factor $c^1$ in $\omega$ with $h_w(c^1)$. 

\end{enumerate}

It remains to check that $\varphi_w$ is an injection with image in $\Omega^w(0)$. In case 1, $\varphi_w$ is an involution onto its image, equal to the map which bitflips the value of the mark. Similarly, since $h_w$ is injective, $\varphi_w$ restricted to case 2 can also be thought of as an involution onto its image, where the reverse map is obtained by the same procedure, reversing the roles of $0$ and $1$ (and applying the inverse of the map $h_w$ restricted to its image). Also, note that the domain of case 1 is exactly those marked sequences $\omega \in \Omega^w(1)$ where both choices $\epsilon = 0, 1$ yield a sequence $\omega^{\epsilon} \in \Omega^w(\epsilon)$, and for any such $\omega$, $\varphi_w(\omega)^\epsilon \in \Omega^w(\epsilon)$ for both $\epsilon = 0, 1$. On the other hand, for $\omega$ in the domain of case 2, $\omega^\epsilon \in \Omega^w(\epsilon)$ if and only if $\epsilon = 1$, and $\varphi_w(\omega)^\epsilon \in \Omega^w(\epsilon)$ if and only if $\epsilon = 0$. It follows that the images of cases 1 and 2 under $\varphi_w$ are disjoint, and that the image lies in $\Omega^w(0)$.

If strict inequality $2|b|_1 > |b|$ holds for some $b \in \mathcal{B}(w,w)$, then $h_w$ is not surjective, and $\Omega^w(0) \setminus \text{Im}(\varphi_w)$ is exactly the set of sequences in $\Omega^w(0)$ where the marked compact cluster corresponding to the mark (as defined in case ii) lies outside the image of $h_w$. The number of such sequences is at least a positive fraction of the total by Corollary \ref{cor:parrypals}.  \end{proof}

\begin{remark} \label{rem:guess} Even when the word $w$ has some borders $b$ which are $0$-heavy and others which are $1$-heavy, examining the marked near-compact-cluster sets $\mathfrak{C}^w(\epsilon)$ allows one to make an educated guess about the sign of $\rho_w - \frac{1}{2}$. Take for example the word $w = 100011110001$, which has borders $\mathcal{B}(w,w) = \{1, 10001, w\}$ and compact clusters $\mathfrak{C}^w = \{w, 1000111w, 10001111000w\}.$ Of the twelve marked near-compact clusters with shape $w$, exactly half are in $\mathfrak{C}^w(0)$ and half are in $\mathfrak{C}^w(1)$; for the five with shape $1000111w \in \{0,1\}^{19}$, two are in $\mathfrak{C}^w(0)$ and three are in $\mathfrak{C}^w(1)$; and the remaining shape, which has length $23$, gives a single marked near-compact cluster in $\mathfrak{C}^w(0)$. Thus, we may pair up these marked near-compact clusters to form a (length-preserving) near-bijection between $\mathfrak{C}^w(0)$ and $\mathfrak{C}^w(1)$, leaving one unmatched element of $\mathfrak{C}^w(0)$ of length $23$, and one unmatched element of $\mathfrak{C}^w(1)$ of length $19$. By Fact \ref{fact:parry}, the probability to observe a given word of length $i$ at the marked location has probability roughly $(\lambda^w)^{-i}$, so the length $19$ shape is roughly $(\lambda^w)^{6}$ times more common than the length $23$ shape, up to global constants coming from the right/left eigenvectors. Thus, assuming these constants are small compared to $\lambda^4 \approx 2^4$, we should predict that marked near-clusters marked $1$ are slightly more common, and thus that $\rho^w > \frac{1}{2}$, which is confirmed by an exact calculation. 	
\end{remark}

\subsection{Proof of Theorem \ref{thm:club}}

\begin{proof}
    First we aim to understand letter frequencies in words of fixed length $n$. 
	In a (finite or infinite) sequence $\omega = \omega_1 \omega_2 \cdots$, denote by $N_1(\omega, k)$ the number of ones in $\omega$ up to the $k$th digit. 
	We use and extend the notation from Definition \ref{def:stop!} by writing $Q(\omega, k)=\frac{N_1(\omega, k)}{k}$ for the frequency of $1$s up to the $k$th index, and $Q(\omega, (k_1, k_2])=\frac{N_1(\omega, k_2)-N_1(\omega, k_1)}{k_2-k_1}$ for the frequency over an (integer) interval $1 \leq k_1 < k_2$. Also recall the hitting time $\tau^w$, viewed as a function $\tau^w: \{0, 1\}^* \setminus \Omega^w \to \N$ which records the first appearance of $w$ in $\omega$ when searching from left to right; and recall the letter frequency $\rho^w$ and $\rho^w_n$ from Definition \ref{def:rhos}. 
	
	By a slight abuse of notation we also use $Q(k)$ to denote the random variable equal to the frequency of $1$s up to time $k$ in an i.i.d. Bernoulli$(1/2)$ sequence. We write $\p$ and $\E$ for probability and expectation with respect to the i.i.d. Bernoulli$(1/2)$ measure. 
	
	First note that the frequency of ones over all words of length $n$ is exactly $\frac{1}{2}$:
\begin{equation}\label{eq:letter_dens_base}
    \frac{1}{2} = \frac{1}{2^n}\sum_{\omega \in \{0, 1\}^n}Q(\omega, n)=\frac{1}{2^n}\sum_{\omega \in \Omega_n^w} Q(\omega, n) + \frac{1}{2^n}\sum_{\omega \in \{0, 1\}^n \setminus \Omega_n^w} Q(\omega, n),
    \end{equation}
    The first sum on the right-hand side can be rewritten as
    \begin{equation} \frac{1}{2^n} \sum_{\omega \in \Omega_n^w} \frac{|\omega|_1}{n} = \frac{|\Omega_n^w|}{2^n} \rho^w_n=\mathbb{P}(\tau^w>n)\rho_n^w \end{equation}
    We split the second sum further according to the hitting time $\tau^w$:
    \begin{align} \sum_{\omega \in \{0, 1\}^n \setminus \Omega_n^w} Q(\omega, n) &= \sum_{k=|w|}^{n}\sum_{\substack{\omega \in \{0, 1\}^n \setminus \Omega_n^w \\ \tau^w(\omega)=k}}Q(\omega, n) \\
    	&=\sum_{k=|w|}^{n}\sum_{\substack{\omega \in \{0, 1\}^n \setminus \Omega_n^w \\ \tau^w(\omega)=k}}\left(\frac{k}{n}Q(\omega, k)+\frac{n-k}{n}Q(\omega, (k, n])\right).
    \end{align}
    For fixed $k$ and summing over $\omega$, the first term becomes
    \begin{equation}\mathbb{P}(\tau^w=k) \E(Q(\tau^w )| \tau^w = k) \frac{k}{n}.\end{equation}
    Since $\tau^w$ is a stopping time with respect to the filtration $\mathcal{F}_n = \sigma[\omega_1, \ldots, \omega_n]$ (and measure $\p$), conditionally on the event $\{\tau^w(\omega) = k\}$, the sequence $\omega_{k+1}, \ldots, \omega_n$ is uniform over $\{0, 1\}^{n-k}$. Thus, the second term simplifies as
    \begin{equation}\frac{1}{2^n}\sum_{\substack{\omega \in \{0, 1\}^n \setminus \Omega_n^w \\ \tau^w(\omega)=k}}\frac{n-k}{n}Q(\omega, (k, n])=\frac{1}{2}\mathbb{P}(\tau^w=k)\frac{n-k}{n}.\end{equation}
    Plugging these two formulae into \eqref{eq:letter_dens_base} and re-arranging yields
    \begin{equation} \label{eq:bigsauce}
		\begin{split}
	\frac{1}{2} &=\mathbb{P}(\tau^w>n)\rho_n^w+\sum_{k=|w|}^n\mathbb{P}(\tau^w=k)\mathbb{E}(Q(\tau^w)|\tau^w=k)\frac{k}{n}+\sum_{k=|w|}^n\frac{1}{2}\mathbb{P}(\tau^w=k)\frac{n-k}{n} \\
	&= \mathbb{P}(\tau^w>n)\rho^w_n+\sum_{k=|w|}^n\mathbb{P}(\tau^w=k)\mathbb{E}(Q(\tau^w)|\tau^w=k)\frac{k}{n}\\
	&\hspace{2.7cm}+\frac{1}{2}\left(1-\mathbb{P}(\tau^w>n)-\sum_{k=|w|}^n\frac{1}{2}\mathbb{P}(\tau^w=k)\frac{k}{n}\right)
    	\end{split}
	\end{equation}
       
    The idea is to take an appropriate linear combination using Equation \eqref{eq:bigsauce}.
 	The weights will be given by the expression $\mu(n) = \frac{1}{n+1}$ for $n \in [|w|, N]$ for large $N$. Note that for fixed $k$, as $N \to \infty$ we have
 	\begin{equation}\label{eq:magicmike} \sum_{n=k}^{N}\mu(n)\frac{k}{n}=1+O(N^{-1})\end{equation}
 	since the sum telescopes. Now consider the following rearranged form of $\eqref{eq:bigsauce}$: 
    \begin{equation} 
		\frac{1}{2} \left(\mathbb{P}(\tau^w>n)+\sum_{k=|w|}^{n}\mathbb{P}(\tau^w=k)\frac{k}{n}\right) = \mathbb{P}(\tau^w>n)\rho_n^w + \sum_{k=|w|}^n\mathbb{P}(\tau^w=k)\mathbb{E}(Q(\tau^w)|\tau^w=k)\frac{k}{n}.
	\end{equation}
    Multiply by $\mu(n)$ and sum over $n \in [|w|, N]$ to obtain
    \begin{equation} 
    	\begin{split} 
    		\frac{1}{2}\sum_{n=|w|}^{N}\mu(n) & \left(\mathbb{P}(\tau^w>n)+\sum_{k=|w|}^{n}\mathbb{P}(\tau^w=k)\frac{k}{n}\right) = \\
    	&\sum_{n=|w|}^{N}\mu(n)\left(\mathbb{P}(\tau^w>n)\rho_n^w+\sum_{k=|w|}^n\mathbb{P}(\tau^w=k)\mathbb{E}(Q(\tau^w)|\tau^w=k)\frac{k}{n}\right).
    	\end{split} 
    \end{equation}
    After exchanging (finite) sums and using Equation \ref{eq:magicmike}, we obtain
	\begin{equation}
		\begin{split}
		\sum_{n=|w|}^{N}\mu(n)\mathbb{P}(\tau^w>n)\left(\frac{1}{2}-\rho_n^w\right)
		+ \frac{1}{2}\sum_{k=|w|}^{N}\mathbb{P}(\tau^w=k)
		&= \sum_{k=|w|}^{N}\mathbb{P}(\tau^w=k)\mathbb{E}(Q(\tau^w)\mid \tau^w=k) \\
		&\quad + O(N^{-1}).
		\end{split}
	\end{equation}
    Now take $N \to \infty$ and substitute $\mu(n) = \frac{1}{n+1}$: this becomes
    \begin{equation} \label{eq:frequency_at_hitting_time_vs_frequency_in_forbidden}
    \frac{1}{2}+\sum_{n=|w|}^{\infty}\frac{\mathbb{P}(\tau^w>n)}{n+1}\left(\frac{1}{2}-\rho_n^w\right)= \E[Q(\tau^w)],
    \end{equation}
    as desired. 
\end{proof}

\section{Examples \& Heuristics} \label{sec:guessing}

In this section we collect some examples, counterexamples, and heuristics related to the ordering of words $w$ by their $\rho$ values. 

\subsection{Larger forbidden sets}

So far we have focused our attention on avoiding a single word as a factor. One might naturally wonder about simultaneously forbidding multiple patterns. The same notions of entropy and letter frequency make sense in this context: given a (finite) set of forbidden patterns $\mathcal{F}$, let $\Omega^{\mathcal{F}}$ denote the set of sequences avoiding all words in $\mathcal{F}$ as factors, and $\rho^{\mathcal{F}}$ the limiting frequency of $1$s over sequences in $\Omega^{\mathcal{F}}$. A naive attempt to generalize the monotonicity principle to this case would be to simply count the total number of $1$s over all words in $\mathcal{F}$: one might guess that if $\mathcal{F}$ itself is $0$-heavy, i.e. if 

\begin{equation} 2\sum_{f \in \mathcal{F}} |f|_1 \leq \sum_{f \in \mathcal{F}} |f|, \end{equation}
then $\rho^{\mathcal{F}} \geq 1/2$. This is plainly false, as the following two examples demonstrate. 

\begin{example} \label{ex:domino}
	Let $\mathcal{F}=\{000, 110, 011\}$. Observe that $\mathcal{F}$ itself is $0$-heavy, with $5$ total $0$s and $4$ total $1$s. By transfer matrix analysis,
	$\rho^{\mathcal{F}} \approx 0.4115$. Alternatively, we give a direct combinatorial proof that $\rho^{\mathcal{F}} < 1/2$. Since $|\Omega_n^{\mathcal{F}}|\to \infty$, we can drop the identically one string without changing
	the limiting frequency of $1$s. All other words in $\Omega_n^{\mathcal{F}}$ are of the form $ua_1a_2\dots a_k v$ for some $k$, where 
	$u\in \{\emptyset, 0, 00\}, a_i\in \{10, 100\}, v\in \{\emptyset, 1\}$. Since $u$ and $v$ do not affect the limiting frequency of 1s, 
	it suffices to see that for fixed $u, v$, the number of $100$ blocks grows linearly with $n$ on average. This translates to the elementary
	problem among all compositions of $m=n-|u|-|v|$ into parts of size 2 or 3, the average number of $3$s grows linearly. Denoting the total number of
	such compositions by $T_m$, we have the recurrence $T_m = T_{m-2} + T_{m-3}$, and for the total number $E_m$ of size $3$ parts over all sequences in $T_m$, 
	we have $E_m = E_{m-2} + E_{m-3}+T_{m-3}$. Thus the average number of parts of size $3$ satisfies the recurrence
	$$\frac{E_m}{T_m} = \frac{E_{m-2}+E_{m-3}+T_{m-3}}{T_{m-2}+T_{m-3}}\geq \frac{E_{m-2}+E_{m-3}}{T_{m-2}+T_{m-3}}+ \frac{T_{m-3}}{3T_{m-3}},$$
	where $T_{m-2} \leq 2 T_{m-3}$ since the entropy $\lambda^{\mathcal{F}} \leq 2$. Thus 
	$$\frac{E_m}{T_m}\geq \min\left(\frac{E_{m-2}}{T_{m-2}}, \frac{E_{m-3}}{T_{m-3}}\right)+\frac{1}{3}.$$
	This is sufficient to guarantee linear growth of $\frac{E_m}{T_m}$.
	
	We note that the forbidden list $\mathcal{F}'=\{000, 11\}$ yields essentially the same space as $\mathcal{F}$, and the same limiting frequency $\rho$. This (counter)example is essentially minimal. 
\end{example}

One might object that the forbidden list $\mathcal{F}'$ in Example \ref{ex:domino} does not weight $0$s and $1$s equally: since $000$ is length $3$ while $11$ is length $2$, forbidding $000$ is less probabilistically costly, and thus forbidding it excludes fewer $0$s than $11$ does $1$s. Still, requiring that all words in $\mathcal{F}$ are the same length is not enough to make this naive guess correct, as the next example shows. 

\begin{example} 
	Take $\mathcal{F} = \{0000, 1011\}$: then $\mathcal{F}$ is $0$-heavy, and by direct calculation, $\rho^{\mathcal{F}} \approx .4966.$ 
\end{example}

\subsection{Local patterns \& Covariance}

A natural guess for how local pattern densities are related is the following calculation over an independent sequence of coin flips. Let $Y_i, i \in \N$ be i.i.d. Bernoulli$(1/2)$ random variables, and for any word $w$, let $N_n^w = N_n^w(Y_1, \ldots, Y_n)$ denote the total number of $w$-factors in the sequence $(Y_1, Y_2, \ldots, Y_n)$. For any two words $v,w$, a direct computation gives 
\begin{equation}
	\lim_{n \to \infty} \frac{1}{n} \Cov(N_n^w, N_n^v) = 2^{-|v|-|w|} \left(-(|v|+|w|-1)+ \sum_{b \in \mathcal{B}(v,w) \cup \mathcal{B}(w,v)} 2^{|b|} \right)
\end{equation}
There is a small but important subtlety here: if $v$ is itself a proper factor of $w$ (which was not allowed in our original definition of $\mathcal{B}$), then $\mathcal{B}(v,w) \cup \mathcal{B}(w, v)$ is interpreted as a multiset which includes a copy of $v$ for every factor of $v$ contained in $w$ in addition to any cross-borders of the usual form. For example, $\mathcal{B}(11, 10110110) \cup \mathcal{B}(10110110, 11) = \{1, 11, 11\}.$

For words $v$ and $w$, let $\rho^w(v)$ denote the frequency of $v$ when conditioning on avoiding $w$:  
\begin{equation} \rho^w(v) = \lim_{n \to \infty} \frac{1}{n |\Omega^w_n|} \sum_{\omega \in \Omega_n^w} N^v_n(\omega). \end{equation}
If the random variables $N^v$ and $N^w$ are positively (resp. negatively) correlated under i.i.d. measure $\p$, we might naively guess that $\rho^w(v) < 2^{-|v|}$ (resp. $\rho^w(v) > 2^{-|v|}$), the ambient frequency of $v$s in $\p$. This makes some heuristic sense: forbidding $w$ is like conditioning on $N^w$ being small, so if $N^w$ and $N^v$ are positively (resp. negatively) correlated, then $N^v$ should also be small (resp. large) on this event. Unfortunately, this is false in general: for example, the words $v = 0011, w = 0000$ are negatively correlated but $\rho^w(v) \approx .06231 < 1/16$ by a transfer matrix computation. (Interestingly, in this case, $\rho^v(w) \approx .06438 > 1/16$.)

Taking $v = 1$, which corresponds to our letter frequency $\rho^w = \rho^w(1)$, we obtain the special case 

\begin{equation} \lim_{n \to \infty} \frac{1}{n} \Cov(N_n^w, N_n^1) = 2^{-|w|-1} (2|w|_1-|w|). \end{equation}

Note that this expression is monotone increasing in $|w|_1$, in agreement with the monotonicity principle discussed in Section \ref{subsec:conj}. However, the covariance is a rather coarse statistic compared to $\rho$ and $q$: the former is constant among words of the same length and same number of $1$s, while as we have seen, $\rho$ and $q$ are not. 

\subsection{Extremal $\rho$ values} 

Among all words of a fixed length $k$, forbidding the word $10^{k-1}$ or its reversal should maximize the frequency $\rho$, although we do not have a proof of this. The optimizer is less clear when comparing words with both fixed length $k$ and a fixed total number of $1$s $j$. Computer computations suggest that, for any fixed $j, k$ with $j < k/2$ and $j \neq \{0, 1\}$, the maximum $\rho$ value is achieved for the word $w$ which has the minimum possible entropy $\lambda^w$, while the minimum is achieved by any word with the maximum possible entropy. Note that there always exists a word with $\mathcal{B}(w,w) = \{w\}$, namely $1^j 0^{k-j}$, which achieves the minimum possible entropy. The maximum entropy is, by Theorem \ref{thm:juicy}, achieved (roughly speaking) by the word with the most, largest borders. For example, an explicit computer check confirms that any word $w$ with $|w| = 10$, $|w|_1 = 3$ and $\mathcal{B}(w,w) = \{w\}$ achieves the maximum $\rho$ value over all such words, while the minimum $\rho$ value over the same set is achieved for the word $0(100)^3$, which has many (large) borders because of its periodic structure. It is not clear whether there is a simple way to describe the total ordering by $\rho$ values within such a class. Explicit computer computations suggest that for $j < k/2$, larger entropy typically gives a smaller $1$s frequency, but this is not always true: an example is the pair $v = 110^511$, $w = 010110010$ (with $k = 9, j = 4$), where $\lambda^v < \lambda^w$, but $\rho^v < \rho^w$.

One may object that among the words with the same length and same number of $1$s, since different entropy values $\lambda$ can occur, the resulting shift spaces and their measures of maximal entropy are fundamentally incomparable. Here is one possible remedy: consider the ordering of $\rho$ values among words with a fixed border set $\mathcal{B}(w,w)$, which by Theorem \ref{thm:juicy} fixes the entropy. Among these, computer computations suggest that words with higher powers of $y$ in their border polynomials give smaller values of $\rho$: for example, by direct computation, $\rho^{100001} < \rho^{000110}$, with corresponding border polynomials $x^6y^2 + xy$ and $x^6y^2 + x$, respectively. (Note also that although the assumptions of Theorem \ref{thm:coffee} hold for the pair $100001$ and $111001$, the latter word being the bitflip of $000110$, the conclusion of Theorem \ref{thm:coffee} does not imply the stated comparison between the $\rho$ values as a corollary: it only gives that $\rho^{100001} > \rho^{111001} = 1-\rho^{000110}$.) This agrees with the idea of the proof of Theorem \ref{thm:redbeer}, but we are not aware of a general class of injective maps which demonstrate these types of inequalities other than the $\psi_{v,w}$ maps arising in Theorem \ref{thm:coffee}. 

\section{Future directions} \label{sec:further}

Finally, we collect some future directions left open by our results. 

\begin{enumerate}[label={}, leftmargin=1.5em]
	
\item ({\bf{Arbitrary SFTs $\&$ Local Patterns}}) Our notions of pattern frequency make sense for shifts of finite type over any finite alphabet (rather than just the binary alphabet), with base space being any Cayley graph (instead of our situation in $\mathbb{Z}$), with any forbidden set (not just single words), and any arbitrary unions/intersections of cylinder sets (not just the cylinder set of $1$). For a larger base alphabet, what is the right generalization of the balanced borders condition for perfect letter symmetry? The situation for SFTs over $\mathbb{Z}^2$ is already significantly more difficult from a dynamical systems perspective: for example, the ordering of shift spaces by their entropy is unknown. For more general forbidden patterns and local statistics, what new phenomena arise? Our methods can likely be applied to give some results in these situations.

\item ({\bf{Ordering by frequency}}) Our theorems and the computer computations we performed do not give any conclusive answer, or even a natural guess, for how the $\rho$ and $q$ values are ordered among words with the same length and the same number of $1$s, or among words with the same length and the same entropy. Can these orderings be described in a combinatorially simple way? Part of the difficulty in answering these and similar questions is that the space of polynomials that arise as border polynomials is highly structured, but in a way that is combinatorially complex. One approach may be to generalize the characterization of (one-variable) border-polynomials described in~\cite[Theorem 5.1]{GuibasOdlyzko1981Periods} to the two-variable case. Alternatively, one could try to generalize the proof in~\cite[Section 7]{GuibasOdlyzko1981StringOverlaps}, which compares recursions directly, to the two-variable case. This is one possible route to removing the assumption $\rho_n^v \leq \rho_n^w$ from Corollary \ref{cor:kave}. 

\item ({\bf{Various orderings}}) As we observed previously, the $\rho$ and $q$ orderings are not perfect inverses of each other. Similarly, for more general SFTs, the limiting covariance statistic described in Section \ref{sec:guessing} often predicts how the pattern frequency will change compared to the full shift (where no words are forbidden). Can one describe when and why these differences occur, precisely or heuristically? 

\item ({\bf{Entropy differences}}) All our theorems only give explicit injections between shift spaces with equal entropy. Is it possible to give nice maps between $\Omega^v, \Omega^w$ in the case where $\lambda^v \neq \lambda^w$? Although embeddings/factor maps between the corresponding shift spaces exist under an easy-to-check periodic-point condition~\cite[Theorems 10.1.1, 10.3.1]{LindMarcus2021SymbolicDynamics} and are necessarily sliding block codes (i.e. they are local maps), it is not clear how to construct an explicit such map, even for special cases. Moreover, even if such a map exists, it may not pushforward the Parry measure on the domain to the Parry measure on the target, and it may not be useful for comparing letter densities. 

\item ({\bf{Frequency matches and Irreducibility}}) Note that a priori, $\rho^w = q^w = 1/2$ is possible without having exact $0 \leftrightarrow 1$ symmetry, and Proposition \ref{prop:baboosh} does not rule out this possibility. However, a computer search suggests that this symmetry is necessary for $\rho = q = 1/2$: 

\begin{conjecture} \label{conj:crispy} A word $w$ has $\rho^w = 1/2$ if and only if $w$ has balanced borders. The same holds with $\rho^w$ replaced by $q^w$. \end{conjecture}

In other words, it seems to be impossible for different borders which are more or less $1$-favored to cancel each other out to yield frequency exactly $1/2$. This is reminiscent of~\cite{GuibasOdlyzko1981StringOverlaps} (see Theorem 1.8 and the discussion following it), where the authors speculate about irreducibility of certain border polynomials. 

\end{enumerate}

\section*{Acknowledgements}
\addcontentsline{toc}{section}{Acknowledgements}

Jacob Richey was supported by a Simons Foundation grant awarded to the Rényi Institute. 
Mikl\'os B\'ona was supported by Simons Foundation Award 940024. 
Balázs Maga was supported by the NKFIH Grants no. 152535 and no. 152922, funded by the Ministry of Innovation and Technology of Hungary from the National Research, Development and Innovation Fund. We thank the three readers who pointed out a size four counterexample to the monotonicity conjecture in the original version of this paper, and thus catalyzed this revision. 


\appendix
\titleformat{\section}{\normalfont\Large\bfseries}{Appendix \thesection:}{1em}{}

\section{Convergence to Parry Chain} \label{app:parryconv}
	
In this appendix we recall the construction and some properties of a finite state Markov chain corresponding to a given shift of finite type, commonly known as the Parry chain (named for William Parry \cite{Parry1964IntrinsicMarkovChains}), and prove convergence of the uniform measure on the set $\Omega_n^w$ to a sample from the Parry chain. The basis of this connection is that for any shift of finite type (including our spaces $\Omega^w$ as a special case), the set of allowable sequences can be realized as the set of paths in a finite graph. There may be many possible choices for such a graph, sometimes called an {\bf{edge-shift graph}}, and the corresponding adjacency matrix is commonly called a {\bf{transfer matrix}} for the shift space. For the shift spaces $\Omega^w$ considered in our work, the minimal valid edge-shift graph is called the {\bf{follower set graph}}, see \cite[p.73]{LindMarcus2021SymbolicDynamics} for the general version or \cite[Sec 3.1]{ChandgotiaMarcusRicheyWu2026SinglePattern} for the one-word SFT case. Another valid choice is the graph obtained from the de Bruijn graph on binary sequences of length $k-1$ by deleting a single edge corresponding to the transition $w_1\cdots w_{k-1} \to w_2\cdots w_k$ where $|w| = k$. 

The Parry chain (and corresponding Parry measure) makes sense and has nice properties for any primitive matrix: 

\begin{definition} Fix a primitive matrix $A$ with finite state space $S$ and $\{0, 1\}$ entries, and let $\mu, r$ denote the Perron-Frobenius eigenvalue and corresponding eigenvector of $A$, with any normalization for $r$ such that $r$ is strictly positive. Denote by $P = P(A)$ the {\bf{Parry chain of A}}, i.e. the matrix with entries 
	\begin{equation} P_{xy} = A_{xy} \frac{r_y}{\mu r_x}, \quad x, y \in S. \end{equation}
\end{definition}

\begin{fact} \label{fact:parry} The Parry chain $P$ satisfies the following properties:
	\begin{enumerate} \item $P$ has stationary measure, called the {\bf{Parry measure}}, $\pi$ given by $\pi_x = \frac{\ell_x r_x}{\sum_y \ell_y r_y}$, where $\ell$ is the left eigenvector of $A$ with eigenvalue $\mu$, and $\pi_x > 0$ for all $x \in S$.
		\item $P$ is a stochastic matrix
		\item \label{eq:parryform} For $n \geq 1$ and all $x, y \in S$,  
		\begin{equation} (P^n \pi)_{xy} = \mu^{-n} \ell_x r_y \end{equation}
	\end{enumerate}
\end{fact}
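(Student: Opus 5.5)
The plan is to verify the three items directly from the eigenvector equations $Ar = \mu r$ and $\ell A = \mu \ell$, with the Perron--Frobenius theorem supplying positivity. Since $A$ is primitive, Perron--Frobenius gives a simple eigenvalue $\mu>0$ with strictly positive right and left eigenvectors $r$ and $\ell$. Hence every $\ell_x r_x>0$, so $\pi_x>0$ and the normalization $\sum_y \ell_y r_y$ is positive. This settles the positivity claim in item 1.

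For item 2, entrywise nonnegativity of $P$ is clear from $A_{xy}\ge 0$ and $r>0$. For the row sums,
\begin{equation*}
\sum_y P_{xy} = \frac{1}{\mu r_x}\sum_y A_{xy} r_y = \frac{(Ar)_x}{\mu r_x} = 1 .
\end{equation*}
For stationarity in item 1, compute
\begin{equation*}
\sum_x \pi_x P_{xy} = \frac{1}{\sum_z \ell_z r_z}\sum_x \ell_x r_x A_{xy}\frac{r_y}{\mu r_x} = \frac{r_y(\ell A)_y}{\mu \sum_z \ell_z r_z} = \pi_y .
\end{equation*}

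Item 3 is where I expect the only real obstacle, namely fixing the intended meaning of the displayed formula. As typeset, $(P^n\pi)_{xy}$ is not dimensionally coherent, since $P^n\pi$ is a vector. I will read it as the statement about path probabilities under the stationary chain, which is what is used later (for instance in Remark \ref{rem:guess}). The claim is that for any admissible path $x=x_0,x_1,\dots,x_n=y$ in the graph of $A$,
\begin{equation*}
\pi_{x_0}\prod_{i=1}^n P_{x_{i-1}x_i} = \frac{\mu^{-n}\,\ell_x r_y}{\sum_z \ell_z r_z}.
\end{equation*}
The proof is a telescoping product. Each factor equals $r_{x_i}/(\mu r_{x_{i-1}})$ because $A_{x_{i-1}x_i}=1$ along an admissible path. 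The product therefore collapses to $\mu^{-n} r_y/r_x$, and multiplying by $\pi_x=\ell_x r_x/\sum_z \ell_z r_z$ gives the formula. In words, the probability of a path depends only on its length and endpoints, which is the maximal-entropy (uniform) property.

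Summing the same telescoping identity over all admissible paths gives the matrix form
\begin{equation*}
(P^n)_{xy} = \mu^{-n} A^n_{xy}\, \frac{r_y}{r_x},
\end{equation*}
which covers the other plausible reading of item 3. Combined with the Perron--Frobenius asymptotic $A^n \sim \mu^n\, r\ell/(\ell\cdot r)$, this yields $(P^n)_{xy}\to \pi_y$. That convergence is what will be needed in the subsequent convergence argument, so I would record it alongside the path formula.
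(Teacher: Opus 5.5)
Your verification of all three items is correct, and it is the standard argument; the paper states this Fact without proof, so there is no other route to compare. Your reading of item 3 as the stationary path probability $\pi_{x_0}\prod_{i=1}^n P_{x_{i-1}x_i}=\mu^{-n}\ell_{x_0}r_{x_n}/\sum_z \ell_z r_z$ is exactly how the paper uses it in the proof of Theorem \ref{thm:parrymain}. There the normalization $\sum_x \ell_x r_x=1$ removes the constant and $n$ counts steps, so it becomes $\mathbb{P}_{L_n}(t)=\mu^{-(L_n-1)}\ell_{t_1}r_{t_{L_n}}$.
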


\begin{theorem} \label{thm:parrymain} Let $A$ be any primitive transfer matrix with Parry chain $P$. Denote by $S_n$ the set of allowed paths in $A$ of length $n \geq 1$:
	\begin{equation} \label{eq:allowed} S_n = \{s_1s_2\cdots s_n: A_{s_i s_{i+1}} = 1 \text{ for } i = 1, \ldots, n-1\}.\end{equation}
	Let $\mathbb{P}_{n}$ be the measure induced by $P$ on $S_n$, as in \ref{eq:parryform} of Fact \ref{fact:parry}, and for integers $i, j$, let $\mathbb{Q}_{[i, n-j]}$ be the uniform measure on $S_n$ restricted to the symbols $(s_i, \ldots, s_{n-j})$. Fix any integer sequences $i(n),j(n)$ such that $i(n), j(n) \ll n$ and $i(n), j(n) \to \infty$ as $n \to \infty$. Then 
	\begin{equation} \TV(\p_{n-i(n)-j(n)}, \mathbb{Q}_{[i(n), n-j(n)]}) \to 0 \text{ as }  n \to \infty \end{equation}
	where $\TV$ denotes total variation distance. 
\end{theorem}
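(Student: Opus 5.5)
We want to show that the uniform measure on $S_n$, restricted to the window $[i(n), n-j(n)]$, is close in total variation to the stationary Parry chain run for $m = n-i(n)-j(n)$ steps. The plan is to compute the window marginal of the uniform measure exactly with powers of $A$, and then invoke Perron--Frobenius convergence of $\mu^{-t}A^t$.

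First, the marginal. Write $\mathbf 1$ for the all-ones vector. For a fixed allowed path $s = s_i\cdots s_{n-j}$, the number of paths in $S_n$ restricting to $s$ factors as
\[
(\mathbf 1^{t}A^{i-1})_{s_i}\,(A^{j}\mathbf 1)_{s_{n-j}},
\]
because the prefix and suffix of the path can be chosen independently. Hence
\[
\mathbb{Q}_{[i,n-j]}(s) = \frac{(\mathbf 1^t A^{i-1})_{s_i}\,(A^{j}\mathbf 1)_{s_{n-j}}}{\mathbf 1^t A^{n-1}\mathbf 1}.
\]
The Parry measure gives the same path probability
\[
\mathbb{P}_m(s) = \pi_{s_i}\prod_k P_{s_k s_{k+1}} = \frac{\ell_{s_i} r_{s_{n-j}}}{\mu^{m-1}\sum_y \ell_y r_y}.
\]
The product telescopes, since $\pi_x = \ell_x r_x/\sum_y \ell_y r_y$ and each factor $P_{xy}$ contributes $r_y/(\mu r_x)$. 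Here I am taking $m$ to be the window length, and I will fix the off-by-one in indexing once at the start. Both measures are therefore supported on the same set of allowed window paths, and both have the form ``boundary weight at the start $\times$ boundary weight at the end'', with no dependence on the interior.

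Second, the limits. By Perron--Frobenius for primitive $A$, normalize so that $\ell^t r = 1$; then $\mu^{-t}A^t \to r\ell^t$ entrywise as $t\to\infty$, at a geometric rate. Consequently
\[
\mu^{-(i-1)}\mathbf 1^tA^{i-1} \to (\mathbf 1^t r)\,\ell^t,\qquad
\mu^{-j}A^j\mathbf 1 \to (\ell^t\mathbf 1)\, r,
\]
\[
\mu^{-(n-1)}\mathbf 1^tA^{n-1}\mathbf 1 \to (\mathbf 1^t r)(\ell^t \mathbf 1).
\]
Substituting these into the formula for $\mathbb{Q}$, the powers of $\mu$ combine to $\mu^{-(m-1)}$, up to the indexing convention. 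This gives, uniformly over all allowed $s$,
\[
\frac{\mathbb{Q}_{[i,n-j]}(s)}{\mathbb{P}_m(s)} = \frac{a_{s_i}(i)\,b_{s_{n-j}}(j)}{c(n)},
\]
where $a_x(i)\to 1$, $b_y(j)\to 1$ and $c(n)\to 1$ as $i,j,n\to\infty$. The key point is that the ratio depends on $s$ only through its endpoints, which range over the finite set $S$, so the convergence is uniform in $s$ no matter how long the window is. The hypotheses $i(n),j(n)\to\infty$ are exactly what is needed here; $i,j\ll n$ only ensures the window is nontrivial.

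Third, conclude. A uniform bound $|\mathbb{Q}(s)/\mathbb{P}(s)-1|\le\varepsilon_n$ over the common support, with $\varepsilon_n\to 0$, gives
\[
\TV(\mathbb{P}_m,\mathbb{Q}) \le \tfrac12\sum_s \mathbb{P}_m(s)\,\varepsilon_n = \tfrac12\varepsilon_n \to 0.
\]
The main obstacle is bookkeeping rather than anything conceptual: matching the exponents of $\mu$ and the index conventions between ``paths of length $n$'' and ``window of $m$ symbols'', and checking that $\mathbb{P}_m$ as induced via Fact \ref{fact:parry} really is the stationary chain started from $\pi$, whose path probabilities telescope as above. Once the product formulas agree, the result is just Perron--Frobenius convergence at the two boundaries.
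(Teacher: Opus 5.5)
Your proposal is correct and follows essentially the same route as the paper's proof. You write the window marginal of the uniform measure exactly, via powers of $A$, as a start-boundary count times an end-boundary count over the total count. You then apply the entrywise Perron--Frobenius asymptotics $A^t=\mu^t r\ell^t(1+o(1))$ to each factor, note that $\mathbb{Q}/\mathbb{P}$ depends only on the two endpoints and so tends to $1$ uniformly, and conclude with $\TV\le\frac12\sup|\mathbb{Q}/\mathbb{P}-1|$. Your indexing (suffix factor $A^{j}\mathbf 1$, window length $n-i-j+1$) is actually the self-consistent one, whereas the paper writes $A^{j(n)-1}$ and $L_n=n-i(n)-j(n)$; this off-by-one does not affect the argument.
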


\begin{proof}
	Let $\mu, \ell, r$ denote the Perron-Frobenius eigenvalue and corresponding left/right eigenvectors of $A$, normalized so that
	\begin{equation}
		\sum_x \ell_x r_x = 1.
	\end{equation}
	Since $A$ is primitive, the Perron-Frobenius theorem implies that there exists $\eta < \mu$ such that
	\begin{equation}\label{eq:entrywisePF}
	(A^m)_{xy}
	=
	\mu^m r_x \ell_y
	\left(1 + O\left((\eta/\mu)^m\right)\right),
	\end{equation}
	where the error is uniform in $x,y \in S$. Write
	\begin{equation}
		L_n = n - i(n) - j(n),
	\end{equation}
	and fix an admissible word
	\begin{equation}
		t = t_1 \cdots t_{L_n} \in S_{L_n}.
	\end{equation}
	We compute
	\begin{equation}\label{eq:block-probability}
		\mathbb{Q}_{[i(n),n-j(n)]}(w)
		=
		\frac{
			\left(\sum_x (A^{i(n)-1})_{x t_1}\right)
			\left(\sum_y (A^{j(n)-1})_{t_{L_n} y}\right)
		}{
			\sum_{x,y}(A^{n-1})_{xy}
		}.
	\end{equation}
	Applying \eqref{eq:entrywisePF} to the numerator and denominator yields
	\begin{equation}\label{eq:RN}
		\mathbb{Q}_{[i(n),n-j(n)]}(t)
		=
		\mu^{-(L_n-1)}
		\ell_{t_1} r_{t_{L_n}}
		(1 + \varepsilon(t_1, t_{L_n})),
	\end{equation}
	where the error $\varepsilon(t_1, t_{L_n}) = O((\eta/\mu)^{i(n) \vee j(n)}$ uniformly in $t_1, t_{L_n}$. On the other hand, by Fact \ref{fact:parry},
	\begin{equation}
		\mathbb{P}_{L_n}(w)
		=
		\mu^{-(L_n-1)}
		\ell_{t_1} r_{t_{L_n}}.
	\end{equation}
	Therefore
	\begin{equation}
		\frac{
			\mathbb{Q}_{[i(n),n-j(n)]}(t)
		}{
			\mathbb{P}_{L_n}(t)
		}
		=
		1 + \varepsilon_n(t_1,t_{L_n}).
	\end{equation}
	Finally,
	\begin{align}
		\TV(\mathbb{P}_{L_n},\mathbb{Q}_{[i(n),n-j(n)]})
		&=
		\frac12
		\sum_{t \in S_{L_n}}
		\left|
		\mathbb{P}_{L_n}(t)
		-
		\mathbb{Q}_{[i(n),n-j(n)]}(t)
		\right|
		\\
		&=
		\frac12
		\sum_{t \in S_{L_n}}
		\mathbb{P}_{L_n}(t)
		|\varepsilon(t_1,t_{L_n})|
		\\
		& \leq \frac{1}{2} \sup_{s, s' \in S^2} |\varepsilon(s, s')| \to 0. 
	\end{align}
\end{proof}

\begin{corollary}\label{cor:parrypals} Fix a binary word $w$ and any word $v$ which is not a factor of $w$. Let $N_n^v$ denote the random variable which counts, for a sequence $\omega \in \Omega^w$, the number of factors $v$ in $\omega$, i.e. the number of $i \in [n]$  such that $\omega_{i}\omega_{i+1}\cdots\omega_{i+|v|-1} = v$. Then there exists a constant $c = c(v,w) > 0$ such that for any $\epsilon > 0$,
	\begin{equation} \mathbb{Q}_n(N_n^v < (c-\epsilon) n) \to 0 \quad \text{ as } n \to \infty. \end{equation}
\end{corollary}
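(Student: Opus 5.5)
We will prove Corollary \ref{cor:parrypals} by combining Theorem \ref{thm:parrymain} with a law of large numbers for the Parry chain. The natural reading of ``$v$ is not a factor of $w$'' is wrong for the argument, since what we actually need is that $v$ is itself allowed, i.e.\ $v \in \Omega^w$ ($w$ is not a factor of $v$). We will assume this, together with the standing assumption that the transfer matrix $A$ is primitive ($w\notin\mathcal{R}$ and $w$ non-constant, or else we pass to the irreducible component carrying the entropy).

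Choose $A$ to be the de Bruijn-type graph on states $\{0,1\}^{k-1}$ with the single edge $w_{[1,k-1]}\to w_{[2,k]}$ deleted, where $k=|w|$. Then $\Omega_n^w$ is in bijection with paths, and occurrences of $v$ at position $i$ correspond to a cylinder event on a window of $m=\max(|v|,k)$ consecutive states.

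The first step is to show that the Parry measure gives positive mass to that cylinder. Since $v\in\Omega^w$ and $A$ is primitive, some admissible path spells $v$. By Fact \ref{fact:parry}, every admissible finite path $t$ has $\mathbb{P}(t)=\mu^{-(|t|-1)}\ell_{t_1}r_{t_{|t|}}>0$. Summing over the finitely many paths realizing $v$ gives $c:=\pi(\text{$v$ starts at a given position})>0$, independent of the position by stationarity.

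The second step is the law of large numbers under the Parry chain. The Parry chain is a primitive (irreducible, aperiodic) finite Markov chain, so the block process is an ergodic finite chain. Its additive functional $N^v_{L}$ satisfies $N^v_L/L\to c$ in probability; alternatively, a variance bound $\mathrm{Var}(N^v_L)=O(L)$ from exponential mixing, together with Chebyshev, suffices.

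The third step transfers this to the uniform measure $\mathbb{Q}_n$ on $\Omega_n^w$. Take $i(n)=j(n)=\lfloor\sqrt n\rfloor$. By Theorem \ref{thm:parrymain}, the law of the middle block under $\mathbb{Q}_n$ is within $o(1)$ total variation of $\mathbb{P}_{L_n}$, where $L_n=n-2\sqrt n$. Occurrences in the middle block lower-bound $N_n^v$, so
\begin{equation*}
\mathbb{Q}_n\big(N_n^v<(c-\epsilon)n\big)\le \mathbb{P}_{L_n}\big(N^v_{L_n}<(c-\epsilon)n\big)+o(1).
\end{equation*}
Because $L_n/n\to1$, the right-hand side tends to $0$ by the second step.

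The main obstacle is the justification of primitivity, and hence of the positive-mass claim in the first step: one must check that the follower-set graph restricted to its essential component is primitive and that the word $v$ is realizable inside that component rather than only in a transient part. If this fails for reducible $w$, the argument should be restricted to non-$\mathcal{R}$ words; that restriction is all the applications in Theorems \ref{thm:coffee} and \ref{thm:redbeer} need.
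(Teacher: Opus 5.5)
Your proof is correct and is essentially the paper's argument. Both use a primitive transfer matrix for $w\notin\mathcal{R}$ (the paper cites the follower-set graph of \cite{ChandgotiaMarcusRicheyWu2026SinglePattern}, you use the de Bruijn graph minus one edge), strict positivity of the Parry probability of the cylinder spelling $v$, the law of large numbers for the Parry chain, and Theorem \ref{thm:parrymain} to pass to $\mathbb{Q}_n$; you make that last step more explicit via the middle block with $i(n)=j(n)=\lfloor\sqrt{n}\rfloor$.

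Your correction of the hypothesis is also right. The paper's step ``$\tilde{v}$ is an allowable path'' really uses that $w$ is not a factor of $v$, i.e.\ $v\in\Omega^w$, and this is what the applications supply. Your worry about reducible $w$ is justified too: for $w=1^{k-1}0$ with $k\geq 3$ and $v=1^k$, the count $N_n^v$ stays bounded in probability, so the exclusion of $\mathcal{R}$ is genuinely needed.
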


\begin{proof} Assume $w \notin \mathcal{R}$ (c.f. Section \ref{subsec:conj}): we omit the cases $w \in \mathcal{R}$, which can be dealt with by explicit calculations. By~\cite[Section 3]{ChandgotiaMarcusRicheyWu2026SinglePattern}, there exists an edge-shift (di)graph $G = G^w$ and corresponding transfer matrix $A = A^w$ which is primitive. Moreover, denoting by $S$ the vertex set of $G$, this representation gives a bijection between $\Omega^w$ and the set $S_n$ (c.f. Equation \eqref{eq:allowed}) with the property that each $v$ factor in $\omega \in \Omega^w$ corresponds to a factor $\tilde{v} \in S^{|v|}$ at the same position, and vice versa; and moreover, since $v$ is not a factor of $w$, $\tilde{v}$ does occur as an allowable vertex path in $G$. By Theorem \ref{thm:parrymain} and the LLN for Markov Chains applied to the Parry chain $P$ \cite[Theorem C.1]{LevinPeresWilmer2017MarkovChains}, 
	\begin{equation} \mathbb{Q}_n\left(\left|n^{-1} N_n^v - \mu^{-|v|} \pi(\tilde{v}_1)\pi(\tilde{v}_{|v|}) \right| > \epsilon\right) \to 0 \quad \text{ as } n \to \infty, \end{equation}
	where $\mu = \lambda^w$ and $\pi$ is the Parry measure. By Fact \ref{fact:parry}, this limit is strictly positive. 
\end{proof}
\end{document}